\tikzset{>=latex}
 \numberwithin{equation}{section}
\theoremstyle{plain}
\newtheorem{thm}{Theorem}[section]
\newtheorem{cor}[thm]{Corollary}
\newtheorem{lem}[thm]{Lemma}
\newtheorem{prop}[thm]{Proposition}
\theoremstyle{definition}
\newtheorem{defn}[thm]{Definition}
\theoremstyle{remark}
\newcommand{\N}{\mathbb{N}}
\newcommand{\R}{\mathbb{R}}
\newcommand{\bp}{\begin{proof}[\ensuremath{\mathbf{Proof}}]}
\newcommand{\bs}{\begin{proof}[\ensuremath{\mathbf{Solution}}]}
\newcommand{\ep}{\end{proof}}
\newcommand{\be}{\begin{equation}}
\newcommand{\ee}{\end{equation}}
\begin{document}

\title{On the symmetry and monotonicity of Morrey extremals}

\author{Ryan Hynd\footnote{Department of Mathematics, University of Pennsylvania.  Partially supported by NSF grant DMS-1554130.}\;  and Francis Seuffert\footnote{Department of Mathematics, University of Pennsylvania.}}

\maketitle 

\begin{abstract}
We employ Clarkson's inequality to deduce that each extremal of Morrey's inequality is axially symmetric and is antisymmetric with respect to reflection about a plane orthogonal to its axis of symmetry. We also use symmetrization methods to show that each extremal is monotone in the distance from its axis of symmetry and in the direction of its axis when restricted to spheres centered at the intersection of its axis and its antisymmetry plane. 
\end{abstract}

\section{Introduction} 
Sobolev's inequality asserts for each $p\in (1,n)$, there is a constant $C$ such that 
\be\label{Sobolev}
\left(\int_{\R^n}|u|^{p^*}dx\right)^{1/p^*}\le C\left(\int_{\R^n}|Du|^{p}dx\right)^{1/p}
\ee
for each $u: \R^n\rightarrow\R$ whose first partial derivatives belong to $L^p(\R^n)$ and 
which decays fast enough at infinity. Here 
$$
p^*=\frac{np}{n-p}.
$$
Employing rearrangement methods, Talenti found the smallest 
constant $C^*$  for which \eqref{Sobolev} holds \cite{Talenti}. Talenti also found the {\it Sobolev extremals} or functions for which equality holds in \eqref{Sobolev}  with $C=C^*$; up to scaling, dilating, and translating, they are given by  
$$
u(x)=\frac{1}{(1+|x|^{\frac{p}{p-1}})^{n/p-1}}\quad (x\in \R^n).
$$
In particular, Sobolev extremals are radially symmetric and monotone in the distance from the origin.

\par  Talenti's work on Sobolev's inequality stimulated a lot of interest within the mathematics community.  These results were extended by Aubin for applications in Riemannian geometry \cite{Aubin, Aubin2}. Moreover, these ideas led mathematicians to employ rearrangement methods \cite{Cianchi2, SmWi,Talenti2,VanS}, to seek
best constants \cite{FuMa,Li,SmWi}, and to explore the role of symmetry in various functional inequalities \cite{CaWa, DELT, GaMu, LiWa}. In recent years, researchers have also been using new techniques such as optimal transport to pursue these types of results \cite{Barthe,Cordero, Maggi}. Additionally, a lot of work has been done to quantify these assertions via stability estimates \cite{Bianchi, Carlen, Cianchi, Ne,Seu1,Seu2}.

\par However, much less is known about the equality case of the corresponding inequality for $p\in (n,\infty)$. In this setting, Morrey showed there is $C$ such that 
\be\label{Morrey}
\sup_{x\neq y}\left\{\frac{|u(x)-u(y)|}{|x-y|^{1-n/p}}\right\}\le C\left(\int_{\R^n}|Du|^{p}dx\right)^{1/p}
\ee
for $u\in {\cal D}^{1,p}(\R^n)$ \cite{Morrey}; here $\mathcal{D}^{1,p} (\R^n)$ is the space of weakly differentiable $u: \R^n\rightarrow \R$ for which
$$
u_{x_1},\dots,u_{x_n} \in L^p (\R^n).
$$
This is known as Morrey's inequality. We note that since $p>n$, each $u\in {\cal D}^{1,p}(\R^n)$ agrees almost everywhere with a function that is H\"older continuous with exponent $1-n/p$. Without loss of generality, we may then consider ${\cal D}^{1,p}(\R^n)$ as a subset of the continuous functions on $\R^n$ and identify each $u\in {\cal D}^{1,p}(\R^n)$ with its H\"older continuous representative. 

\par  In recent work, we showed that there is a smallest $C=C_*>0$ so that 
Morrey's inequality holds with $C_*$ and that there exist nonconstant functions for which equality is attained in \eqref{Morrey} with $C$ equal to $C_*$ \cite{HS}.   We will call these functions {\it Morrey extremals}. We also verified that after appropriately rotating, scaling, dilating, and translating a Morrey extremal $u$, it satisfies
\be\label{HolderTwoPoint}
\sup_{x\neq y}\left\{\frac{|u(x)-u(y)|}{|x-y|^{1-n/p}}\right\}=\frac{|u(e_n)-u(-e_n)|}{|e_n-(-e_n)|^{1-n/p}}
\ee
with 
\be\label{upmone}
u(  e_n)=1\;\;\text{and}\;\;u( - e_n)=-1.
\ee
Here $e_n\in \R^n$ is the point with $1$ in its $n$th coordinate and $0$ otherwise. Furthermore, the PDE
\be\label{MorreyPDE}
-\Delta_pu=c(\delta_{e_n}-\delta_{-e_n})
\ee
holds weakly in $\R^n$ for a constant $c>0$. 
\par While $C_*$ and the corresponding Morrey extremals are not explicitly known, many qualitative properties of these functions have been identified. In particular, Morrey extremals which satisfy \eqref{HolderTwoPoint} and \eqref{upmone} are known to be unique, axially symmetric about the $x_n-$axis and antisymmetric about the $x_n=0$ plane (Sections 3 and 6 of \cite{HS}). We established this in our previous work by relying on a uniqueness property of solutions of \eqref{MorreyPDE}. In this paper, we will verify the following theorem as a consequence of Clarkson's inequality.
\begin{thm}\label{mainThm}
Suppose $p>n$, $n\ge 2$, and $u\in {\cal D}^{1,p}(\R^n)$ is the Morrey extremal which satisfies \eqref{HolderTwoPoint} and \eqref{upmone}. Then 
\be\label{AxialSymm}
u(Ox)=u(x),\quad x\in \R^n
\ee 
for each orthogonal transformation $O:\R^n\rightarrow \R^n$ with $Oe_n=e_n$. Moreover,  
\be\label{ReflectSymm}
u(Tx)=-u(x),\quad x\in \R^n
\ee
where $Tx=x-2x_ne_n$.
\end{thm}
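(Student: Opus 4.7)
The plan is to turn the two claimed symmetries into uniqueness statements for extremals and then apply Clarkson's inequality, which is strict since $p>n\ge 2$ forces $p>2$. Fix either an orthogonal $O$ with $Oe_n=e_n$, or let $Tx=x-2x_ne_n$, and define the competitor
\[
v(x)=u(Ox)\quad\text{(axial case)}\qquad\text{or}\qquad v(x)=-u(Tx)\quad\text{(reflection case)}.
\]
In both cases a change of variables gives $\int_{\R^n}|Dv|^p dx=\int_{\R^n}|Du|^p dx$, and the identities $Oe_n=e_n$, $O(-e_n)=-e_n$ (resp.\ $T(\pm e_n)=\mp e_n$) together with the sign in the reflection case yield $v(e_n)=1$ and $v(-e_n)=-1$. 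So $v$ is continuous, lies in $\mathcal{D}^{1,p}(\R^n)$, has the same Dirichlet $p$-energy as $u$, and takes the same values at $\pm e_n$.

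Next I would feed the midpoint $w=(u+v)/2$ into Morrey's inequality. Since $w(e_n)-w(-e_n)=2=u(e_n)-u(-e_n)$ and $u$ achieves equality in \eqref{Morrey}, the chain
\[
C_*\,\|Du\|_p=\frac{|u(e_n)-u(-e_n)|}{|e_n-(-e_n)|^{1-n/p}}=\frac{|w(e_n)-w(-e_n)|}{|e_n-(-e_n)|^{1-n/p}}\le C_*\,\|Dw\|_p\le C_*\cdot\tfrac{1}{2}\bigl(\|Du\|_p+\|Dv\|_p\bigr)=C_*\,\|Du\|_p
\]
must collapse, forcing $\|D((u+v)/2)\|_p=\|Du\|_p=\|Dv\|_p$.

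Now I would invoke Clarkson's inequality for $p\ge 2$ applied to the vector fields $Du$ and $Dv$ in $L^p(\R^n;\R^n)$:
\[
\Bigl\|\tfrac{D(u+v)}{2}\Bigr\|_p^{p}+\Bigl\|\tfrac{D(u-v)}{2}\Bigr\|_p^{p}\le \tfrac{1}{2}\|Du\|_p^{p}+\tfrac{1}{2}\|Dv\|_p^{p}=\|Du\|_p^{p}.
\]
Combined with the equality $\|D((u+v)/2)\|_p=\|Du\|_p$ from the previous step, this yields $\|D(u-v)\|_p=0$. Hence $u-v$ is a constant; since $u(e_n)=v(e_n)=1$, that constant is $0$, so $u=v$ pointwise. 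Reading off what $v$ was in each case gives \eqref{AxialSymm} and \eqref{ReflectSymm}.

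The only point that needs genuine care is the second step, showing that the midpoint $w$ still produces the extremal quotient: this uses in an essential way that $\pm e_n$ are fixed (up to swap) by both $O$ and $T$, and that we compensated the swap in the reflection case with the minus sign. Everything else is a direct application of translation invariance of Lebesgue measure, Morrey's inequality for $w$, and the strict uniform convexity of $L^p$ encoded in Clarkson's inequality for $p>2$.
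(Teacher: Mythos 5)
Your proposal is correct and follows essentially the same route as the paper: you construct the competitors $u(Ox)$ and $-u(Tx)$, verify they agree with $u$ at $\pm e_n$ and have equal $p$-energy by orthogonal change of variables, and then use the midpoint $(u+v)/2$ together with Clarkson's inequality to force $D(u-v)\equiv 0$ — this is exactly the content of the paper's Lemma \ref{FunLemma} and its application in the proof of Theorem \ref{mainThm}. The only difference is organizational (you first squeeze out $\|Dw\|_p=\|Du\|_p$ and then apply Clarkson, while the paper argues by contradiction from $Du\not\equiv Dv$), and what matters is not strictness of Clarkson's inequality but the presence of the extra term $\|D(u-v)/2\|_p^p$; both points are immaterial to correctness.
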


\par In addition to the symmetries listed above, the Morrey extremal $u$ which satisfies \eqref{HolderTwoPoint} and \eqref{upmone} has some interesting monotonicity features. The first feature is that $u$ is either nonincreasing or nondecreasing in the distance from the $x_n-$axis.  The key here is that in addition to being axially symmetric $u$ is positive and quasiconcave when restricted to the half space $x_n>0$.
  
 \begin{thm}\label{SchwarzThm}
Assume $p>n$, $n\ge 2$, and  $u\in  {\cal D}^{1,p}(\R^n)$ is the Morrey extremal which satisfies \eqref{HolderTwoPoint} and \eqref{upmone}.
If  $x^1,x^2\in\R^n$ with 
$$
x^1_n=x^2_n\ge 0\;\;\text{and}\;\; |x^2|\le |x^1|,
$$
or if
$$
x^1_n=x^2_n\le 0\;\;\text{and}\;\;|x^1|\le |x^2|,
$$
then
$$
u(x^1)\le u(x^2).
$$
\end{thm}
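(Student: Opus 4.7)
The plan is to compare $u$ with its slicewise Schwarz rearrangement $u^*$ about the $x_n$-axis and then invoke the uniqueness of the Morrey extremal to conclude $u=u^*$, which by construction has the claimed monotonicity. By the antisymmetry from Theorem~\ref{mainThm}, it suffices to prove the first case $h:=x^1_n=x^2_n\geq 0$; the second follows by applying it to $Tx^1,Tx^2$. Write $H^+:=\{x_n>0\}$ and $S_h:=\{x_n=h\}$, and note that $u\equiv 0$ on $S_0$ by antisymmetry.

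The first preparatory step is to show that $u>0$ on $H^+$, so that slicewise Schwarz rearrangement is well defined. From \eqref{MorreyPDE} with $c>0$, $-\Delta_p u=c\,\delta_{e_n}\geq 0$ weakly on $H^+$, so $u$ is $p$-superharmonic there. Combined with the continuous boundary value $u\equiv 0$ on $\partial H^+=S_0$, the strong minimum principle for $p$-superharmonic functions yields $u>0$ throughout $H^+$ (the alternative $u\equiv 0$ is ruled out by $u(e_n)=1$).

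Next I would define $u^*$ by Schwarz-rearranging $u|_{S_h}$ about the axis point $(0,\dots,0,h)$ within the $(n{-}1)$-dimensional slice $S_h$ for each $h>0$, setting $u^*\equiv 0$ on $S_0$, and extending antisymmetrically $u^*(x):=-u^*(Tx)$ for $x_n<0$. By construction, $u^*$ is axially symmetric about the $x_n$-axis, antisymmetric about $S_0$, and nonincreasing in the distance from the $x_n$-axis on every slice of $H^+$. The Polya--Szego inequality for this slicewise rearrangement gives
\begin{equation}
\int_{\R^n}|Du^*|^p\,dx \leq \int_{\R^n}|Du|^p\,dx,
\end{equation}
while the Schwarz rearrangement property (the maximum on each slice is placed at the central axis point), together with $u(e_n)=1$, yields $u^*(e_n)\geq 1$ and, by antisymmetry, $u^*(-e_n)\leq -1$. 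Hence
\begin{equation}
\sup_{x\neq y}\frac{|u^*(x)-u^*(y)|}{|x-y|^{1-n/p}} \geq \frac{|u^*(e_n)-u^*(-e_n)|}{|2e_n|^{1-n/p}} \geq \frac{|u(e_n)-u(-e_n)|}{|2e_n|^{1-n/p}} = \sup_{x\neq y}\frac{|u(x)-u(y)|}{|x-y|^{1-n/p}},
\end{equation}
the last equality being \eqref{HolderTwoPoint}. Combining shows that the Morrey quotient of $u^*$ is at least $C_*$, and since $C_*$ is the best constant, every inequality above must be an equality. In particular $u^*(e_n)=1$, $u^*(-e_n)=-1$, the pair $(e_n,-e_n)$ realizes the H\"older seminorm of $u^*$, and $\|Du^*\|_p=\|Du\|_p$; thus $u^*$ is itself a Morrey extremal satisfying \eqref{HolderTwoPoint} and \eqref{upmone}. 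The uniqueness of such extremals recalled in the excerpt forces $u^*=u$, and the theorem follows since $u^*$ is nonincreasing in the distance from the $x_n$-axis on each slice of $H^+$ by construction.

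The main obstacle will be justifying the slicewise Polya--Szego inequality rigorously in this setting: I must verify that rearrangement of each horizontal slice of a function in $\mathcal{D}^{1,p}(\R^n)$ that is positive on $H^+$ and vanishes on $S_0$ produces another element of $\mathcal{D}^{1,p}(\R^n)$ whose full $L^p$ gradient norm (combining both transverse and axial derivatives) does not increase. The positivity of $u$ on $H^+$ established above is what makes the Schwarz rearrangement in each slice a valid operation, while the vanishing on $S_0$ ensures the antisymmetric extension introduces no gradient singularity across $S_0$. A secondary care-point is confirming that $(e_n,-e_n)$, rather than some other pair of points, realizes the H\"older seminorm of $u^*$ so that the uniqueness statement applies; this is handled automatically by the chain of equalities identified above.
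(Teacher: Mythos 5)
Your strategy is viable but genuinely different from the proof in the paper. The paper's argument for Theorem \ref{SchwarzThm} is essentially soft: it combines three facts --- that $u$ is positive and quasiconcave on $\{x_n>0\}$ (from \cite{HS}), that $u(x)\to0$ as $|x|\to\infty$ (from \cite{HS2}), and the axial symmetry of Theorem \ref{mainThm} --- to conclude that each set $\{y\in\R^{n-1}:u(y,a)\ge c\}$ with $a>0$, $c\in(0,1]$ is a closed ball in $\R^{n-1}$ centered at the origin, from which the monotonicity is immediate. You instead symmetrize: Schwarz-rearrange $u^+$ slice by slice in the $y$-variables (an $(n-1)$-codimensional Steiner symmetrization), oddly extend, and feed the result into the uniqueness mechanism. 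This is precisely the template the paper uses for Theorem \ref{CapThm}, with the cap symmetrization replaced by the Steiner one; its virtue is that it does not need the quasiconcavity input at all, and its cost is the P\'olya--Szeg\"o inequality for this symmetrization, which you rightly flag as the main obstacle. That inequality is classical for Steiner symmetrization of arbitrary codimension and is covered by the same reference \cite{VanS} (see also \cite{BrockSolynin}) that the paper invokes in the cap case, so the obstacle is surmountable by citation. A streamlining remark: once you have $\int|Du^*|^p\,dx\le\int|Du|^p\,dx$ and $u^*(\pm e_n)=\pm1$, Lemma \ref{FunLemma} yields $u^*=u$ directly; the intermediate chain of H\"older-quotient inequalities is not needed.

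Two points do need shoring up. First, your minimum-principle derivation of $u>0$ on $\{x_n>0\}$ is incomplete as written: the half-space is unbounded, so comparison with the zero boundary data on $\{x_n=0\}$ must be supplemented by control of $u$ at infinity; this is exactly the decay $\lim_{|x|\to\infty}u(x)=0$ established in \cite{HS2}, which you never invoke. (Alternatively, positivity on the half-space is already proved in \cite{HS} and can simply be cited, or sidestepped entirely by rearranging $u^+$ rather than $u$.) Second, the same decay is what makes the rearrangement admissible: positivity alone does not guarantee that the sets $\{y:u(y,h)>c\}$ have finite $(n-1)$-dimensional measure, which is required for the slicewise Schwarz rearrangement to be defined. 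With these two citations added and the Steiner P\'olya--Szeg\"o inequality referenced, your proof goes through.
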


\par The second monotonicity feature is that $u$ is nondecreasing in the $x_n$ variable when restricted to each sphere centered at the origin. We will use symmetrization methods to prove this and employ a certain P\'olya-Szeg\"o inequality. In particular, we will verify more generally that $u^+$ and $u^-$ are equal to their cap rearrangements as defined in Section \ref{CapSec}. 

 \begin{thm}\label{CapThm}
Assume $p>n$, $n\ge 2$, and  $u\in  {\cal D}^{1,p}(\R^n)$ is the Morrey extremal which satisfies \eqref{HolderTwoPoint} and \eqref{upmone}.
If  $x^1,x^2\in \R^n$ with 
$$
 |x^1|= |x^2|\;\; \text{and}\;\;x^1_n\le x^2_n, 
 $$
then
$$
u(x^1)\le u(x^2).
$$
\end{thm}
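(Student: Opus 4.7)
My plan is to construct a cap-rearranged function $u^*$, use a Pólya–Szegő inequality to conclude $\|Du^*\|_p \le \|Du\|_p$, and then invoke the uniqueness of Morrey extremals satisfying \eqref{HolderTwoPoint}--\eqref{upmone} to deduce $u^* = u$. For a nonnegative function $f$ on the sphere $\{|x| = r\}$ and a pole $\nu$ with $|\nu| = r$, let $f^\#$ be the unique function on that sphere that is axially symmetric about the line through $0$ and $\nu$, nondecreasing in $x \cdot \nu$, and equimeasurable with $f$ with respect to surface measure; for nonnegative $g$ on $\R^n$, extend this by performing the rearrangement on each sphere $\{|x| = r\}$ with pole $re_n$. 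Using Theorem \ref{mainThm} together with the positivity of $u$ on $\{x_n > 0\}$ recorded before Theorem \ref{SchwarzThm}, split $u = u^+ - u^-$ so that $u^+$ is supported in $\{x_n \ge 0\}$ and $u^-$ is supported in $\{x_n \le 0\}$; let $v$ denote the cap rearrangement of $u^+$ with pole direction $e_n$ and $w$ denote the cap rearrangement of $u^-$ with pole direction $-e_n$, and set
$$
u^*(x) := v(x) - w(x).
$$

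Then I would check the two ingredients needed to make $u^*$ extremal. A Pólya–Szegő inequality for spherical cap rearrangement, applied on each sphere and integrated in the radial variable, gives $\|Dv\|_p \le \|Du^+\|_p$ and $\|Dw\|_p \le \|Du^-\|_p$; since $v$ and $w$ have supports meeting only on $\{x_n = 0\}$ where each vanishes, these combine to $\|Du^*\|_p \le \|Du\|_p$. On the other hand, cap rearrangement attains its maximum at the pole, so
$$
u^*(e_n) = v(e_n) = \max_{|y|=1} u^+(y) \ge u^+(e_n) = 1,
$$
and analogously $u^*(-e_n) \le -1$. Combining these with the extremality of $u$ yields
$$
C_* \|Du\|_p = \frac{u(e_n) - u(-e_n)}{|2 e_n|^{1-n/p}} \le \frac{u^*(e_n) - u^*(-e_n)}{|2 e_n|^{1-n/p}} \le \sup_{x \ne y} \frac{|u^*(x) - u^*(y)|}{|x-y|^{1-n/p}} \le C_* \|Du^*\|_p \le C_* \|Du\|_p,
$$
so all four inequalities collapse to equalities. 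In particular $u^*(e_n) = 1$, $u^*(-e_n) = -1$, and $u^*$ is itself a Morrey extremal satisfying \eqref{HolderTwoPoint}--\eqref{upmone}; by uniqueness of such extremals (Sections 3 and 6 of \cite{HS}), $u^* = u$, which by the definition of $u^*$ is precisely the monotonicity in $x_n$ on each sphere centered at the origin asserted by the theorem.

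The main technical obstacle is rigorously justifying the Pólya–Szegő step in $\mathcal{D}^{1,p}(\R^n)$ rather than on a bounded domain: one must verify that the sphere-by-sphere cap rearrangement of $u^\pm$ produces a weakly differentiable function on $\R^n$ and that the $L^p$ bound on the full gradient follows from the spherical inequality after integrating in the radial variable. Once that is in place, the gluing of $v$ and $w$ across $\{x_n = 0\}$ introduces no boundary terms, because each factor already vanishes on the opposite hemisphere of every sphere, so the decomposition of $\|Du\|_p^p$ into $\|Du^+\|_p^p + \|Du^-\|_p^p$ transfers intact to $u^*$.
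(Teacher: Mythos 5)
Your proposal is correct and follows essentially the same route as the paper: cap-rearrange the positive part of $u$ (your $u^\star = v - w$ coincides, via the antisymmetry $u^-(x)=u^+(Tx)$, with the paper's odd extension of $(u^+)^\star$), apply the Pólya--Szegő inequality for cap symmetrization, and conclude by the uniqueness of the normalized extremal (the paper's Lemma \ref{FunLemma}). The only cosmetic difference is that the paper pins down $v^\star(\pm e_n)=\pm 1$ directly from the known fact $u(e_n)=\sup_{\R^n}u=1$, whereas you force it by collapsing the chain of inequalities.
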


\par In what follows, we will prove Theorems \ref{mainThm}, \ref{SchwarzThm}, \ref{CapThm} in sections \ref{basicproofSec}, \ref{SchwarzSec} and \ref{CapSec}, respectively.  In addition, we will take a detour to verify the axial symmetry of Morrey extremals using the ``axial average" and ``axial sweep" transformations presented in section \ref{AltproofSec}. In the appendix, we'll also prove a useful approximation result for functions in ${\cal D}^{1,p}(\R^n)$ with $p>n$. Finally, we would like to thank Eric Carlen, Elliott Lieb, and Peter McGrath for their advice and insightful discussions related to this work.

\begin{figure}
\begin{tikzpicture}

\foreach \Point in {(-2,2), (3,2)}{
    \node[gray] at \Point {\textbullet};}
    
\node at (-1.9,2.4) {$x^2$};

\node at (3.1,2.4) {$x^1$};

\foreach \Point in {(-3,-2.5), (1,-2.5)}{
    \node[gray] at \Point {\textbullet};}
    
\node at (-3,-2.8) {$x^2$};

\node at (1,-2.8) {$x^1$};

\draw[-,thick, gray] (-4,2) to (4,2);

\draw[-,thick, gray] (-4,-2.5) to (4,-2.5);

\draw[->,thick] (0,-4) to (0,4);

\draw[->,thick] (-4,0) to (4,0);

\node at (0.5,4){$x_2$};

\node at (4,0.5){$x_1$};


\hspace{9cm}

\draw[thick,gray] (1,0) arc (0: 360: 1);

\foreach \Point in {(-0.25881, .96592), (0.86602,-.5)}{
    \node[gray] at \Point {\textbullet};}

 \node at (-0.25881, 1.4) {$x^2$};

\node at (1.2,-.5) {$x^1$};

\draw[thick,gray] (3,0) arc (0: 360: 3);

\foreach \Point in {(2.12132034356, 2.12132034356), (0.52094,-2.95442)}{
    \node[gray] at \Point {\textbullet};}

\node at (2.6,2.3) {$x^2$};

\node at (.7,-3.2) {$x^1$};

\draw[->,thick] (0,-4) to (0,4);

\draw[->,thick] (-4,0) to (4,0);

\node at (0.5,4){$x_2$};

\node at (4,0.5){$x_1$};

\end{tikzpicture}
\caption{These diagrams illustrate the monotonicity properties of the Morrey extremal satisfying \eqref{HolderTwoPoint} and \eqref{upmone} for $n=2$.  Theorems \ref{SchwarzThm} and \ref{CapThm} respectively assert that $u(x^1)\le u(x^2)$ 
for $x^1,x^2\in \R^2$ which are ordered as on the horizontal lines in the figure on the left and as on each circle in the figure on the right. }
\end{figure}
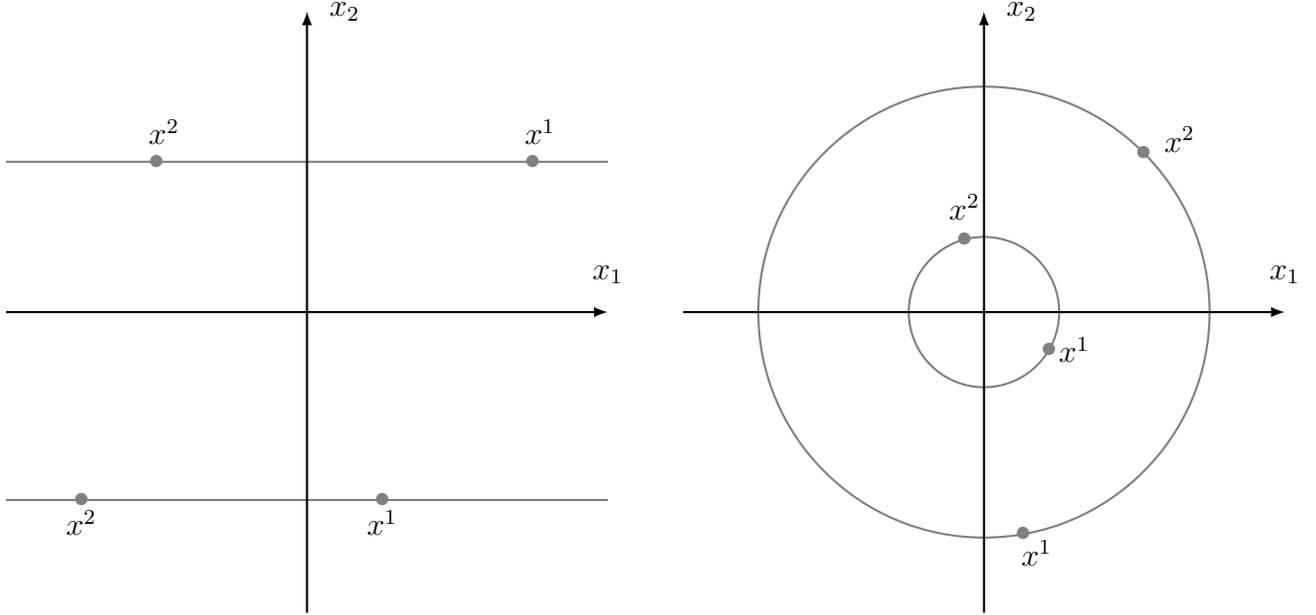

\section{Axial symmetry and reflectional antisymmetry}\label{basicproofSec}
For the remainder of this note, we will suppose 
$$
p>n\quad \text{and}\quad n\ge 2.
$$
We will also use the notation 
\be
[v]_{1-n/p}:=\sup_{x\ne y}\left\{\frac{|v(x)-v(y)|}{|x-y|^{1-n/p}}\right\}
\ee
for the $1-n/p$ H\"older seminorm of $v$. This will allow us to write the sharp form of Morrey's inequality \eqref{Morrey} a bit more concisely as
$$
[v]_{1-n/p}\le C_*\left(\int_{\R^n}|Dv|^pdx\right)^{1/p}.
$$

\par Let us now recall the elementary inequality: for $a,b\in \R^n$, 
\be\label{babyClarkson}
\left|\frac{a+b}{2}\right|^p+
\left |\frac{a-b}{2}\right|^p\le \frac{1}{2}|a|^p+  \frac{1}{2}|b|^p.
\ee
This type of inequality was studied by Clarkson \cite{Clarkson} in connection with uniformly convex spaces, and it immediately implies 
\be\label{Clarkson}
\int_{\R^n}\left
 |\frac{Dv+Dw}{2}\right|^pdx+\int_{\R^n}\left
 |\frac{Dv-Dw}{2}\right|^pdx\le \frac{1}{2}\int_{\R^n}|Dv|^pdx+  \frac{1}{2}\int_{\R^n}|Dw|^pdx
\ee
for $v,w\in {\cal D}^{1,p}(\R^n)$.  A direct consequence is as follows.

\begin{lem}\label{FunLemma}
Suppose $u$ is a Morrey extremal which satisfies \eqref{HolderTwoPoint} and \eqref{upmone}.
Further assume $v\in {\cal D}^{1,p}(\R^n)$ satisfies 
\be
v(  e_n)=1\;\;\text{and}\;\;v( - e_n)=-1.
\ee
and 
$$
\int_{\R^n}|Dv|^pdx\le \int_{\R^n}|Du|^pdx.
$$
Then $u\equiv v$. 
\end{lem}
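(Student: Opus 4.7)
The plan is to exploit the rigidity in Clarkson's inequality \eqref{Clarkson} by evaluating the midpoint $(u+v)/2$ at the distinguished pair $\pm e_n$. First I would read off the value of $C_*\|Du\|_p$ from the hypotheses: since $u$ is a Morrey extremal satisfying \eqref{HolderTwoPoint} and \eqref{upmone},
$$C_*\left(\int_{\R^n}|Du|^p\,dx\right)^{1/p} = [u]_{1-n/p} = \frac{|u(e_n)-u(-e_n)|}{|2e_n|^{1-n/p}} = 2^{n/p}.$$

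Next, set $w := (u+v)/2 \in \mathcal{D}^{1,p}(\R^n)$. Linearity and the boundary data give $w(e_n)=1$ and $w(-e_n)=-1$, so the two-point lower bound
$$[w]_{1-n/p} \ge \frac{|w(e_n)-w(-e_n)|}{|2e_n|^{1-n/p}} = 2^{n/p}$$
combined with Morrey's inequality applied to $w$ yields $\|Dw\|_p \ge \|Du\|_p$. Now I would plug into Clarkson's inequality \eqref{Clarkson} for $u$ and $v$:
$$\int_{\R^n}\left|\frac{Du+Dv}{2}\right|^p\!dx + \int_{\R^n}\left|\frac{Du-Dv}{2}\right|^p\!dx \le \frac{1}{2}\int_{\R^n}|Du|^p\,dx + \frac{1}{2}\int_{\R^n}|Dv|^p\,dx \le \int_{\R^n}|Du|^p\,dx,$$
where the final inequality uses the hypothesis $\|Dv\|_p \le \|Du\|_p$. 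Because the first term on the left is $\|Dw\|_p^p \ge \|Du\|_p^p$, the remaining term must vanish, so $Du \equiv Dv$ a.e. Consequently $u-v$ is constant on $\R^n$, and since $(u-v)(e_n)=0$, that constant is zero, giving $u\equiv v$.

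I do not foresee a real obstacle here; the proof is a short rigidity chain. The only minor points of care are verifying that $w$ indeed lies in $\mathcal{D}^{1,p}(\R^n)$ so that Morrey's inequality applies (which is immediate from closure under sums), and noting that a $\mathcal{D}^{1,p}(\R^n)$ function with vanishing weak gradient on the connected domain $\R^n$ is constant, so that the single boundary evaluation at $e_n$ suffices to pin that constant to zero.
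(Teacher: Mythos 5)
Your proof is correct and follows essentially the same route as the paper's: form the midpoint $w=(u+v)/2$, use the two-point data at $\pm e_n$ together with Morrey's inequality to get $\|Dw\|_p\ge\|Du\|_p$, and then let Clarkson's inequality force $Du\equiv Dv$, after which the constant is pinned down at $e_n$. The only cosmetic difference is that you argue directly while the paper phrases the same rigidity step as a contradiction.
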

\begin{proof}
Define 
$$
w:=\frac{u+v}{2}.
$$
Our first assumption on $v$ gives
$$
w(e_n)-w(-e_n)=\frac{v(e_n)-v(-e_n)}{2}+\frac{u(e_n)-u(-e_n)}{2}= u(e_n)-u(-e_n).
$$
It follows that 
$$
[w]_{1-n/p}\ge\frac{|u(e_n)-u(-e_n)|}{|e_n-(-e_n)|^{1-n/p}}=[u]_{1-n/p}.
$$
\par Inequality \eqref{Clarkson} and our second assumption on $v$ imply
\begin{align}
\int_{\R^n}|Dw|^pdx+\int_{\R^n}\left
 |\frac{Du-Dv}{2}\right|^pdx&\le \frac{1}{2}\int_{\R^n}|Du|^pdx+  \frac{1}{2}\int_{\R^n}|Dv|^pdx\\
 &\le \int_{\R^n}|Du|^pdx.
\end{align}
In particular, if $Du\not\equiv Dv$ then 
$$
[u]_{1-n/p}\le [w]_{1-n/p}\le C_*\left(\int_{\R^n}|Dw|^pdx\right)^{1/p}<C_*\left(\int_{\R^n}|Du|^pdx\right)^{1/p}.
$$
However, this would contradict our hypothesis that $u$ is an extremal.   Consequently, there is a constant $c$ such that $v(x)=u(x)+c$ for all $x\in \R^n$. Choosing $x=e_n$ gives 
$$
u(e_n)=  v(e_n)=u(e_n)+c.
$$
That is, $c=0$. 
\end{proof}

\begin{proof}[Proof of Theorem \ref{mainThm}]
Let $O: \R^n\rightarrow \R^n$ be an orthogonal transformation which satisfies $Oe_n=e_n$ and set 
$$
v(x)=u(Ox), \quad x\in \R^n.
$$
Then $v(e_n)=u(e_n)=1$ and $v(-e_n)=u(O(-e_n))=u(-e_n)=-1$. Moreover, 
$$
\int_{\R^n}|Dv|^pdx= \int_{\R^n}|Du|^pdx.
$$
by the change of variables theorem (Theorem 2.44 in \cite{Folland}). In view of Lemma \ref{FunLemma}, $v(x)=u(x)$ for all $x\in \R^n$.  

\par Now set 
$$
w(x)=-u\left(Tx \right)
$$
for $x\in \R^n$, where $Tx= x -2x_ne_n$. As $T(e_n)=- e_n$, 
\be
w(  e_n)=1\;\;\text{and}\;\;w( - e_n)=-1.
\ee
Furthermore, since $T$ is an orthogonal transformation of $\R^n$, we can apply the change of variables theorem again to conclude
$$
\int_{\R^n}|Dw|^pdx= \int_{\R^n}|Du|^pdx.
$$
Lemma \ref{FunLemma} then implies $w(x)=u(x)$ for all $x\in \R^n$. 
\end{proof}

\section{Alternative proofs of axial symmetry}\label{AltproofSec}
In this section, we will use two transformations of ${\cal D}^{1,p}(\R^n)$ functions which result in functions which are axially symmetry with respect to the $x_n-$axis.  To this end, it will be convenient for us to use the variables 
$$
x=(y,x_n)\in \R^{n-1}\times \R
$$
and consider each $u\in{\cal D}^{1,p}(\R^n)$ as the function
$$
(y,x_n)\mapsto u(y,x_n).
$$

\par For $n\ge 3$ and $u\in{\cal D}^{1,p}(\R^n)$, we will also set 
$$
D_ru:=\frac{D_yu\cdot y}{|y|^2}y \quad \text{and}\quad D_{\mathbb{S}^{n-2}}u:=D_yu-D_ru.
$$
This allows us to express the gradient of $u$ as 
$$
Du=(D_ru +D_{\mathbb{S}^{n-2}}u, \partial_{x_n}u).
$$
When $n=2$, we will write
$$
D_{\mathbb{S}^{0}}u(y,x_2):=\frac{\partial_yu(y,x_2)+\partial_yu(-y,x_2)}{2}
$$ 
to stay consistent with our considerations below for $n\ge 3$.

\par In what follows, we'll also make use of the fact that each $u\in \mathcal{D}^{1,p} (\R^n)$ can be approximated by smooth functions. That is, 
for each $u\in  \mathcal{D}^{1,p} (\R^n)$, there is $(u_k)_{k\in \N}\subset C^\infty (\R^n) \cap \mathcal{D}^{1,p} (\R^n)$ 
such that 
$$
\begin{cases}
u_k\rightarrow u\;\text{uniformly on}\; \R^n\\
Du_k\rightarrow Du\; \text{in}\;L^p(\R^n;\R^n)
\end{cases}
$$
as $k\rightarrow\infty$. This assertion likely follows from a general approximation theorem. Nevertheless, we have written a short proof of this fact in Proposition \ref{DensityThm} of the appendix. 

\subsection{Axial average}
For a given $u\in  {\cal D}^{1,p}(\R^n)$, set
$$
u^*(y,x_n):=
\begin{cases}
\displaystyle\fint_{|z|=|y|}u(z,x_n)d\sigma(z), \quad &|y|>0\\
u(0,x_n), \quad & |y|=0
\end{cases}
$$
as its {\it axial average}. Here $\sigma$ is $n-2$ dimensional Hausdorff measure. It is immediate from this definition that $u^*$ is axially symmetric with respect to the $x_n-$axis.  We'll establish a Hardy type inequality and then
 a P\'olya-Szeg\"o type inequality involving $u$ and $u^*$.

\begin{lem}
There is a constant $C$ such that 
\be\label{POineq}
\int_{\R^n}\frac{|u-u^*|^p}{|x-x_ne_n|^p}dx\le C\int_{\R^n}|D_{\mathbb{S}^{n-2}}u|^pdx
\ee
for each $u\in  {\cal D}^{1,p}(\R^n)$. 
\end{lem}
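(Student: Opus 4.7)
The plan is to prove the inequality fiber by fiber, introducing polar coordinates $y=r\omega$ on $\R^{n-1}$ and applying the Poincar\'e--Wirtinger inequality on $\mathbb{S}^{n-2}$ on each sphere. First I would use the approximation result (Proposition \ref{DensityThm}) to reduce to the case $u\in C^\infty(\R^n)\cap\mathcal{D}^{1,p}(\R^n)$, so that all fiberwise expressions are classically well-defined; the inequality for general $u\in\mathcal{D}^{1,p}(\R^n)$ will then follow by Fatou's lemma on the left and $L^p$ convergence of the gradient on the right.

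Assume first that $n\ge 3$. With $r=|y|$ and $\omega\in\mathbb{S}^{n-2}$, one has $dy=r^{n-2}\,dr\,d\sigma(\omega)$ and $|x-x_ne_n|=r$. For each fixed $r>0$ and $x_n\in\R$, the restriction $f(\omega):=u(r\omega,x_n)$ has mean value $u^*(r\omega,x_n)$ on $\mathbb{S}^{n-2}$, so the standard Poincar\'e inequality on the sphere gives a constant $c=c(n,p)$ with
\begin{equation*}
\int_{\mathbb{S}^{n-2}}|f(\omega)-u^*(r\omega,x_n)|^p\,d\sigma(\omega)\le c\int_{\mathbb{S}^{n-2}}|\nabla_{\mathbb{S}^{n-2}}f(\omega)|^p\,d\sigma(\omega).
\end{equation*}
By the chain rule $|\nabla_{\mathbb{S}^{n-2}}f(\omega)|=r\,|D_{\mathbb{S}^{n-2}}u(r\omega,x_n)|$, since tangential directions at $\omega$ pull back to tangential directions at $r\omega$ scaled by $r$. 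Dividing the above by $r^p$, multiplying by the polar weight $r^{n-2}$, and integrating over $r\in(0,\infty)$ and $x_n\in\R$ yields \eqref{POineq} with $C=c$.

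For $n=2$ the sphere $\mathbb{S}^0=\{\pm 1\}$ is zero-dimensional and the spherical Poincar\'e argument is unavailable, so I would replace it with the classical one-dimensional Hardy inequality. In this case $u^*$ is the even part of $u$ in $y$, and $v(y,x_2):=u(y,x_2)-u^*(y,x_2)=\tfrac{1}{2}(u(y,x_2)-u(-y,x_2))$ is odd in $y$, vanishes at $y=0$, and satisfies $\partial_y v=D_{\mathbb{S}^0}u$ by the definition recalled above. Applying Hardy's inequality $\int_0^\infty|v(y)|^p y^{-p}\,dy\le (p/(p-1))^p\int_0^\infty|v'(y)|^p\,dy$ on each half-line for fixed $x_2$, then integrating in $x_2$, delivers \eqref{POineq}.

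The main obstacle is not any single estimate but the justification of the slicing: applying either the spherical Poincar\'e or the one-dimensional Hardy inequality fiberwise requires that for almost every slicing parameter the corresponding slice of $u$ is itself a Sobolev function on the lower-dimensional manifold with the expected trace gradient. The smooth approximation step sidesteps this entirely---on $C^\infty$ functions the slices are smooth, Fubini legitimizes the fiberwise application, and the general inequality follows by a routine passage to the limit using the modes of convergence supplied by Proposition \ref{DensityThm}.
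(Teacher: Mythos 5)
Your proposal is correct and follows essentially the same route as the paper: Poincar\'e's inequality on $\mathbb{S}^{n-2}$ applied fiberwise in $(r,x_n)$ for $n\ge 3$ (your unit-sphere parametrization with the chain-rule factor $r$ is the same computation as the paper's substitution $v(\xi)=u(r\xi,x_n)$), and the one-dimensional Hardy inequality on slices for $n=2$, with Proposition \ref{DensityThm} handling the reduction to smooth functions. The only cosmetic difference is that the paper first integrates over bounded sets $B_R\times[-L,L]$ and then lets $L,R\to\infty$, whereas you pass to the limit directly via Fatou; both are fine.
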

\begin{proof}
First assume $n\ge 3$ and $u\in  C^\infty(\R^n)\cap {\cal D}^{1,p}(\R^n)$.  Recall Poincar\'e's inequality on $\mathbb{S}^{n-2}$: there is a 
constant $C$ such that 
\be\label{Poincare}
\int_{\mathbb{S}^{n-2}}\left|v-\fint_{\mathbb{S}^{n-2}}vd\sigma\right|^pd\sigma\le C\int_{\mathbb{S}^{n-2}}|D_{\mathbb{S}^{n-2}}v|^pd\sigma
\ee
for each $v\in C^\infty(\mathbb{S}^{n-2})$. This inequality can be proved by a minor variation of Theorem 2.10 in \cite{H}. Substituting  $v(\xi)=u(r\xi,x_n)$ gives  
$$
\int_{|y|=r}\left|u(y,x_n)-\fint_{|z|=r}u(z,x_n)d\sigma(z)\right|^pd\sigma(y)\le Cr^p\int_{|y|=r}|D_{\mathbb{S}^{n-2}}u(y,x_n)|^pd\sigma(y)
$$
for each $r>0$.  That is, 
$$
\int_{|y|=r}\frac{|u(y,x_n)-u^*(y,x_n)|^p}{r^p}d\sigma(y)\le C\int_{|y|=r}|D_{\mathbb{S}^{n-2}}u(y,x_n)|^pd\sigma(y).
$$

\par Integrating this inequality over $(r,x_n)\in [0,R]\times[-L,L]$ leads to 
\begin{align}\label{RLineq}
\int_{B_R\times[-L,L]}\frac{|u-u^*|^p}{|x-x_ne_n|^p}dx\nonumber
&= \int^L_{-L}\int^R_0\int_{|y|=r}\frac{\left|u(y,x_n)-u^*(y,x_n)\right|^p}{|y|^p}d\sigma(y)drdx_n\nonumber\\
&\le C\int^L_{-L}\int^R_0\int_{|y|=r}|D_{\mathbb{S}^{n-2}}u(y,x_n)|^pd\sigma(y)drdx_n\nonumber\\
&= C\int_{B_R\times[-L,L]}|D_{\mathbb{S}^{n-2}}u|^pdx\nonumber\\
&\le C\int_{\R^n}|D_{\mathbb{S}^{n-2}}u|^pdx.
\end{align}
Here $B_R:=B_R(0)\subset \R^{n-1}$.  Using Proposition \ref{DensityThm}, it is routine to show \eqref{RLineq} holds for each 
$u\in {\cal D}^{1,p}(\R^n)$. We then conclude \eqref{POineq} by sending $L,R\rightarrow\infty$. 

\par Now suppose $n=2$. As $D_{\mathbb{S}^0}u\in L^p(\R^2)$,
$$
\int_{\R^2}|D_{\mathbb{S}^0}u|^pdx=\int_{\R}\left(\int_{\R}|D_{\mathbb{S}^0}u(y,x_2)|^pdy\right)dx_2<\infty.
$$
Consequently, 
$$
\int_{\R}|D_{\mathbb{S}^0}u(y,x_2)|^pdy<\infty
$$
for almost every $x_2\in \R$. For any such $x_2$, we can apply Hardy's inequality 
\be
\int_{\R}\frac{|f(y)|^p}{|y|^p}dy\le c_p \int_{\R}|f'(y)|^pdy
\ee
to find 
\begin{align}
\int_{\R}\frac{|u(y,x_2)-u^*(y,x_2)|^p}{|y|^p}dy=\int_{\R}\frac{|(u(y,x_2)-u(-y,x_2))/2|^p}{|y|^p}dy\le c_p \int_{\R}|D_{\mathbb{S}^0}u(y,x_2)|^pdy.
\end{align}
The inequality \eqref{POineq} now follows from integrating over $x_2$. 
\end{proof}

\begin{prop}
For all $u\in  {\cal D}^{1,p}(\R^n)$, 
\be\label{PolyaSzego}
\int_{\R^n}|Du^*|^pdx\le \int_{\R^n}|Du|^pdx.
\ee
Equality holds if and only if $u=u^*$. 
\end{prop}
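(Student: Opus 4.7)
The plan is to reduce to the smooth case via Proposition \ref{DensityThm} and then derive a pointwise comparison between $|Du^*|^p$ and a spherical average of $|Du|^p$. Since $u^*(y,x_n)$ depends only on $r=|y|$ and $x_n$, we automatically have $D_{\mathbb{S}^{n-2}}u^*\equiv 0$, so $|Du^*|^2 = |D_r u^*|^2 + |\partial_{x_n}u^*|^2$. For smooth $u$ and $n\ge 3$, differentiating
$$
u^*(y,x_n) = \fint_{\mathbb{S}^{n-2}} u(|y|\xi, x_n)\,d\sigma(\xi)
$$
under the integral shows that $D_r u^*(y,x_n)$ and $\partial_{x_n} u^*(y,x_n)$ equal the spherical averages of the scalar radial derivative $\partial_r u$ and of $\partial_{x_n} u$ over the set $\{(z,x_n):|z|=|y|\}$.

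Applying Jensen's inequality to the convex function $\R^2\ni v\mapsto |v|^p$, together with the pointwise bound $(\partial_r u)^2 + (\partial_{x_n}u)^2 \le |Du|^2$, yields
$$
|Du^*(y,x_n)|^p \le \fint_{|z|=|y|} |Du(z,x_n)|^p\,d\sigma(z).
$$
Because the left side is constant on $\{|y|=r\}$, integrating against $d\sigma(y)$ over this sphere cancels the normalizing factor $|\mathbb{S}^{n-2}|r^{n-2}$, and subsequent integration over $r\in(0,\infty)$ and $x_n\in\R$ produces \eqref{PolyaSzego}. For $n=2$ the argument is the same, with the spherical average replaced by the two-point average over $\{\pm y\}$: one writes $Du^*(y,x_2) = (a+b)/2$ where $a = Du(y,x_2)$ and $b = (-\partial_y u(-y,x_2),\partial_{x_2}u(-y,x_2))$, noting $|b| = |Du(-y,x_2)|$, and then applies convexity of $|\cdot|^p$ (a special case of \eqref{babyClarkson}) before a change of variables $y\mapsto -y$ in the final integral.

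For the equality case, if $\int|Du^*|^p\,dx = \int|Du|^p\,dx$ then the pointwise chain above must be an equality almost everywhere. The slack dropped in the second inequality forces $|D_{\mathbb{S}^{n-2}}u|=0$ almost everywhere, and the Hardy-type estimate \eqref{POineq} then immediately gives $u\equiv u^*$. The approximation step, transferring the inequality from $C^\infty\cap\mathcal{D}^{1,p}(\R^n)$ to all of $\mathcal{D}^{1,p}(\R^n)$, is a routine application of Proposition \ref{DensityThm}: uniform convergence $u_k\to u$ yields pointwise convergence $u_k^*\to u^*$, while the $L^p$-convergence of the smooth approximate gradients combined with weak lower semicontinuity of $v\mapsto \int|Dv|^p\,dx$ passes the inequality to the limit. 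The main obstacle is administrative, namely keeping bookkeeping straight between $n=2$ and $n\ge 3$, where the object $D_{\mathbb{S}^{n-2}}u$ is defined differently, and carefully justifying differentiation under the integral sign for $u^*$ at the smooth level.
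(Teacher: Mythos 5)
Your proposal follows the paper's proof essentially step for step: the spherical-average formula for $Du^*$, Jensen's inequality for $|\cdot|^p$, the pointwise comparison of $|(D_ru,\partial_{x_n}u)|$ with $|Du|$, a density argument via Proposition \ref{DensityThm}, the separate two-point-average treatment of $n=2$, and the Hardy-type inequality \eqref{POineq} to conclude $u=u^*$ in the equality case. The one point to make explicit is that, since the pointwise chain is only available for smooth $u$, the equality analysis for general $u\in{\cal D}^{1,p}(\R^n)$ requires carrying the quantified deficit $\int_{\R^n}|D_{\mathbb{S}^{n-2}}u|^pdx$ through the approximation step (as in \eqref{ustarinfinty}), rather than arguing that the pointwise inequalities are equalities almost everywhere.
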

\begin{proof}
Let us first assume $n\ge 3$ and that $u\in  C^\infty(\R^n)\cap {\cal D}^{1,p}(\R^n)$. Direct computation gives 
\be
Du^*(y,x_n)=\fint_{|z|=r}\left(\left[D_yu(z,x_n)\cdot \frac{z}{r}\right] \frac{y}{r},\partial_{x_n}u(z,x_n)\right)d\sigma(z)
\ee
for $r=|y|>0$; and by Jensen's inequality, 
\begin{align}
|Du^*(y,x_n)|^p&\le \fint_{|z|=r}\left|\left(\left[D_yu(z,x_n)\cdot \frac{z}{r}\right] \frac{y}{r},\partial_{x_n}u(z,x_n)\right)\right|^pd\sigma(z)\\
&= \fint_{|z|=r}|(D_ru(z,x_n),\partial_{x_n}u(z,x_n)|^pd\sigma(z).
\end{align}
It follows that 
\begin{align}
\int_{\R^n}|Du^*|^pdx&=\int_{\R}\int_{\R^{n-1}}|Du^*(y,x_n)|^pdydx_n\\
&=\int_{\R}\int^\infty_0\left(\int_{|y|=r}|Du^*(y,x_n)|^pd\sigma(y)\right)drdx_n\\
& \le \int_{\R}\int^\infty_0\int_{|y|=r}\left(\fint_{|z|=r}|(D_ru(z,x_n),\partial_{x_n}u(z,x_n))|^pd\sigma(z)\right)d\sigma(y)drdx_n\\
&= \int_{\R}\int^\infty_0\int_{|z|=r}|(D_ru(z,x_n),\partial_{x_n}u(z,x_n))|^pd\sigma(z)drdx_n\\
& = \int_{\R}\int_{\R^{n-1}}|(D_ru(z,x_n),\partial_{x_n}u(z,x_n))|^pdzdx_n\\
& = \int_{\R^n}|(D_ru,\partial_{x_n}u)|^pdx.
\end{align}

\par Combining this inequality with Proposition \ref{DensityThm}, we deduce
\be\label{ustarkayest}
\int_{\R^n}|Du^*|^pdx\le \int_{\R^n}|(D_ru,\partial_{x_n}u)|^pdx
\ee
for each $u\in {\cal D}^{1,p}(\R^n)$. Employing the elementary inequality 
$$
|a|^p+|b|^p\le (|a|^2+|b|^2)^{p/2},\quad a,b\in \R^n
$$
with $a=(D_ru,u_{x_n})$ and $b=(D_{\mathbb{S}^{n-2}}u,0)$ gives
\be\label{ElemGradEst}
|(D_ru,u_{x_n})|^p+|D_{\mathbb{S}^{n-2}}u|^p\le |Du|^p.
\ee
It follows that
\be\label{ustarinfinty}
\int_{\R^n}|Du^*|^pdx\le \int_{\R^n}|Du|^pdx-\int_{\R^n}|D_{\mathbb{S}^{n-2}}u|^pdx.
\ee
Consequently, if equality holds in \eqref{PolyaSzego},  
$$
\int_{\R^n}|D_{\mathbb{S}^{n-2}}u|^pdx=0.
$$
We can then appeal to \eqref{POineq} to find $u=u^*$.

\par Now suppose $n=2$. Here
\be\label{ustarnequal2}
u^*(y,x_2)=\frac{u(y,x_2)+u(-y,x_2)}{2}
\ee
and 
\be
Du^*(y,x_2)=\frac{Du(y,x_2)+(-\partial_yu(-y,x_2),\partial_{x_2}u(-y,x_2))}{2}.
\ee
By Clarkson's inequality \eqref{Clarkson},  
\begin{align*}
\int_{\R^2}|Du^*|^pdx&=\iint_{\R^2}\left|\frac{Du(y,x_2)+(-\partial_yu(-y,x_2),\partial_{x_2}u(-y,x_2))}{2}\right|^pdydx_2\\
&\le\frac{1}{2}\iint_{\R^2}|Du(y,x_2)|^pdydx_2+\frac{1}{2}\iint_{\R^2}|(-\partial_yu(-y,x_2),\partial_{x_2}u(-y,x_2))|^pdydx_2\\ 
&\hspace{.5in} -\iint_{\R^2}\left|\frac{Du(y,x_2)-(-\partial_yu(-y,x_2),\partial_{x_2}u(-y,x_2))}{2}\right|^pdydx_2\\
&=\frac{1}{2}\iint_{\R^2}|Du(y,x_2)|^pdydx_2+\frac{1}{2}\iint_{\R^2}|Du(-y,x_2)|^pdydx_2 \\
&\hspace{.5in} -\iint_{\R^2}\left|\left(\frac{\partial_yu(y,x_2)+\partial_yu(-y,x_2)}{2},\frac{\partial_{x_2}u(y,x_2)- \partial_{x_2}u(-y,x_2)}{2}\right)\right|^pdydx_2\\
&\le\int_{\R^2}|Du|^pdx-\int_{\R^2}|D_{\mathbb{S}^{0}}u|^pdx.
\end{align*}
If equality holds in \eqref{PolyaSzego}, we can again appeal to \eqref{POineq} to find $u=u^*$.
\end{proof}
As $u=u^*$ along the $x_n-$axis, the following corollary follows directly from Lemma \ref{FunLemma} and inequality (\ref{PolyaSzego}). 
\begin{cor}
Suppose $u\in  {\cal D}^{1,p}(\R^n)$ is a Morrey extremal which satisfies \eqref{HolderTwoPoint} and \eqref{upmone}. Then $u=u^*$.  
\end{cor}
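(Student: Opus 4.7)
The plan is to apply Lemma \ref{FunLemma} with $v := u^*$. This reduces the corollary to verifying, for the axial average of a Morrey extremal, the two hypotheses of the lemma: the two-point normalization \eqref{upmone} and the energy comparison $\int_{\R^n} |Du^*|^p \, dx \le \int_{\R^n} |Du|^p \, dx$.

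First, I would check the normalization at $\pm e_n$. Writing $e_n = (0, \ldots, 0, 1) \in \R^{n-1} \times \R$, the $y$-component vanishes, so the second clause in the definition of the axial average applies directly: $u^*(e_n) = u(0, 1) = u(e_n) = 1$, and likewise $u^*(-e_n) = u(-e_n) = -1$. Hence $u^*$ inherits \eqref{upmone} from $u$ without any further computation. Second, the energy comparison is precisely the content of the P\'olya--Szeg\"o-type inequality \eqref{PolyaSzego} just established. Putting the two together, Lemma \ref{FunLemma} applies to $v = u^*$ and forces $u \equiv u^*$, which is the claim.

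There is essentially no obstacle, since the corollary is a clean concatenation of two nontrivial ingredients already established in the section. The only point worth a sanity check is that $u^*$ is genuinely an element of $\mathcal{D}^{1,p}(\R^n)$, so that Lemma \ref{FunLemma} is applicable; this is immediate from the finiteness of $\int_{\R^n} |Du^*|^p \, dx$ guaranteed by \eqref{PolyaSzego}, together with the measurability of the spherical average defining $u^*$.
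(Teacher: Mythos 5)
Your proposal is correct and follows exactly the paper's route: the paper derives this corollary in one line by noting that $u=u^*$ along the $x_n$-axis (so the normalization \eqref{upmone} passes to $u^*$) and invoking the P\'olya--Szeg\"o-type inequality \eqref{PolyaSzego} together with Lemma \ref{FunLemma}. Your added sanity check that $u^*\in{\cal D}^{1,p}(\R^n)$ is a reasonable point the paper leaves implicit, but the argument is the same.
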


\subsection{Axial sweep}
For a given $u \in{\cal D}^{1,p}(\R^n)$, we will also consider its {\it axial sweep}
\be
u^\zeta(y,x_n):=u(|y|\zeta,x_n)
\ee
 with respect to a direction $\zeta\in\mathbb{S}^{n-2}$.   Clearly, $u^\zeta$ is axially symmetric for each $\zeta$. In this section, we prove the following assertion. 
\begin{prop}\label{AxialSweepCor}
Suppose $u \in{\cal D}^{1,p}(\R^n)$. There is 
$\zeta\in  \mathbb{S}^{n-2}$ for which 
\be\label{AxialSweepIneq2}
\int_{\R^n}|Du^\zeta|^pdx\le \int_{\R^n}|Du|^pdx.
\ee
If $n\ge 3$ and $u$ is not axially symmetric, $\zeta$ can be chosen so that this inequality is strict. 
\end{prop}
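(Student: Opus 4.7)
The plan is to bound the spherical average of $\int_{\R^n}|Du^\zeta|^p dx$ over $\zeta \in \mathbb{S}^{n-2}$ by $\int_{\R^n}|Du|^p dx$, and then extract a good $\zeta$ by pigeonhole. First, for $u \in C^\infty(\R^n) \cap \mathcal{D}^{1,p}(\R^n)$, the chain rule applied to $(y,x_n) \mapsto (|y|\zeta, x_n)$ yields
\begin{equation}
D_y u^\zeta(y,x_n) = [D_y u(|y|\zeta,x_n)\cdot \zeta]\,\frac{y}{|y|}, \qquad \partial_{x_n} u^\zeta(y,x_n) = \partial_{x_n} u(|y|\zeta,x_n).
\end{equation}
Since the outward unit radial vector at $z = |y|\zeta$ is $\zeta$ itself, one has $|D_y u^\zeta(y,x_n)| = |D_r u(|y|\zeta,x_n)|$, and therefore
\begin{equation}
|Du^\zeta(y,x_n)|^p = |(D_r u,\partial_{x_n}u)(|y|\zeta,x_n)|^p.
\end{equation}
Extending this identity to all $u \in \mathcal{D}^{1,p}(\R^n)$ is a density argument based on Proposition \ref{DensityThm}.

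Next, I would pass to polar coordinates $y = r\xi$ on $\R^{n-1}$. Since the right-hand side above is independent of $\xi \in \mathbb{S}^{n-2}$,
\begin{equation}
\int_{\R^n} |Du^\zeta|^p dx = \omega_{n-2} \int_{\R} \int_0^\infty |(D_r u,\partial_{x_n}u)(r\zeta,x_n)|^p\, r^{n-2}\, dr\, dx_n,
\end{equation}
where $\omega_{n-2} = \sigma(\mathbb{S}^{n-2})$. Averaging over $\zeta$ and applying Fubini, the polar decomposition undoes itself on $\R^{n-1}$ and produces
\begin{equation}
\fint_{\mathbb{S}^{n-2}} \int_{\R^n} |Du^\zeta|^p dx\, d\sigma(\zeta) = \int_{\R^n} |(D_r u,\partial_{x_n}u)|^p dx.
\end{equation}

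By the elementary inequality \eqref{ElemGradEst}, this last integral is at most $\int_{\R^n}|Du|^p dx - \int_{\R^n}|D_{\mathbb{S}^{n-2}}u|^p dx \le \int_{\R^n}|Du|^p dx$. Since the average over $\zeta$ does not exceed $\int_{\R^n}|Du|^p dx$, at least one $\zeta \in \mathbb{S}^{n-2}$ realizes the desired pointwise inequality, giving the first claim. For the strict case with $n \ge 3$ and $u$ not axially symmetric, inequality \eqref{POineq} forces $\int_{\R^n}|D_{\mathbb{S}^{n-2}}u|^p dx > 0$, so the average drops strictly below $\int_{\R^n}|Du|^p dx$ and some $\zeta$ must achieve a strict inequality. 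The main technical point is the density step validating the chain-rule identity for merely $\mathcal{D}^{1,p}$ functions, along with the $n=2$ case where $\mathbb{S}^0 = \{\pm 1\}$ and the ``spherical average'' degenerates to the symmetric two-point mean $\tfrac{1}{2}[g(+1) + g(-1)]$; both are routine given Proposition \ref{DensityThm}.
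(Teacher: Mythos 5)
Your overall strategy coincides with the paper's: compute $Du^\zeta$ for smooth $u$, show that the spherical average of $\int_{\R^n}|Du^\zeta|^p\,dx$ over $\zeta\in\mathbb{S}^{n-2}$ equals $\int_{\R^n}|(D_ru,\partial_{x_n}u)|^p\,dx$, invoke \eqref{ElemGradEst} and \eqref{POineq}, and finish by pigeonhole. The smooth-case chain-rule computation, the averaging identity, the $n=2$ degeneration to a two-point mean, and the strictness argument via \eqref{POineq} are all correct and are exactly what the paper does.

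The one place where your write-up underestimates the difficulty is the passage from smooth $u$ to general $u\in{\cal D}^{1,p}(\R^n)$, which you dismiss as ``a density argument based on Proposition \ref{DensityThm}\dots routine.'' It is not: for a fixed $\zeta$, forming $Du^\zeta$ amounts to restricting $Du$ to the half-plane $\{(r\zeta,x_n): r>0,\ x_n\in\R\}$, a Lebesgue-null set in $\R^n$ when $n\ge 3$, so the convergence $Du_k\to Du$ in $L^p(\R^n;\R^n)$ gives no control on $Du_k^\zeta$ for an individual $\zeta$; one cannot even assert that $u^\zeta\in{\cal D}^{1,p}(\R^n)$ for every $\zeta$, only for $\sigma$-almost every $\zeta$. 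The paper handles this by viewing $(x,\zeta)\mapsto Du_k^\zeta(x)$ as a bounded sequence in $L^p(\R^n\times\mathbb{S}^{n-2};{\cal L}\times\sigma)$, extracting a weak limit $v$, identifying $v(\cdot,\zeta)=Du^\zeta$ for a.e.\ $\zeta$ via integration by parts against $\phi\eta$ and a separability argument in $\phi$, and then using weak lower semicontinuity of the $L^p$ norm. Note that this limiting procedure only delivers the inequality $\fint_{|\zeta|=1}\bigl(\int_{\R^n}|Du^\zeta|^p\,dx\bigr)\,d\sigma(\zeta)\le\int_{\R^n}|(D_ru,\partial_{x_n}u)|^p\,dx$ (this is \eqref{AxialSweepIneq}, refined to \eqref{AxialSweepIneqNbigThree}), not the equality you assert for general $u$; fortunately the inequality is all your pigeonhole step requires, so the argument closes once this step is supplied in full.
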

As $u=u^\zeta$ along the $x_n-$axis, it follows immediately from Lemma \ref{FunLemma} and Proposition \ref{AxialSweepCor} that each extremal is axially symmetric.
\begin{cor}
Suppose $u\in  {\cal D}^{1,p}(\R^n)$ is a Morrey extremal which satisfies \eqref{HolderTwoPoint} and \eqref{upmone}. Then $u=u^\zeta$ for every $\zeta\in \mathbb{S}^{n-2}$.  
\end{cor}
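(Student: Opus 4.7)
The plan is to derive this corollary directly from Proposition~\ref{AxialSweepCor} and Lemma~\ref{FunLemma}, which is exactly the strategy previewed in the sentence preceding the statement. First I would apply Proposition~\ref{AxialSweepCor} to the extremal $u$ to produce at least one direction $\zeta_0\in\mathbb{S}^{n-2}$ satisfying
\[
\int_{\R^n}|Du^{\zeta_0}|^p\,dx \;\le\; \int_{\R^n}|Du|^p\,dx.
\]
Since the axial sweep $u^{\zeta_0}(y,x_n)=u(|y|\zeta_0,x_n)$ agrees with $u$ whenever $y=0$, we immediately obtain $u^{\zeta_0}(e_n)=u(e_n)=1$ and $u^{\zeta_0}(-e_n)=u(-e_n)=-1$. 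These are exactly the hypotheses of Lemma~\ref{FunLemma}, so the lemma forces $u\equiv u^{\zeta_0}$; in particular $u$ is axially symmetric about the $x_n$-axis.

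With axial symmetry in hand, the conclusion for an arbitrary $\eta\in\mathbb{S}^{n-2}$ is a one-line consequence. Given $y\in\R^{n-1}\setminus\{0\}$, choose an orthogonal transformation of $\R^{n-1}$ sending $y/|y|$ to $\eta$ and extend it by the identity in the $x_n$-direction to an orthogonal map of $\R^n$ fixing $e_n$; axial symmetry then yields $u(y,x_n)=u(|y|\eta,x_n)=u^\eta(y,x_n)$, and the equality is trivial at $y=0$. Hence $u=u^\eta$ for every $\eta\in\mathbb{S}^{n-2}$.

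The proof faces essentially no obstacle once Proposition~\ref{AxialSweepCor} and Lemma~\ref{FunLemma} are available. The only point worth flagging is that Proposition~\ref{AxialSweepCor} guarantees a strict inequality only in the case $n\ge 3$ with $u$ not axially symmetric; however, Lemma~\ref{FunLemma} requires only the non-strict bound, so the argument is uniform across $n\ge 2$ and one does not need to invoke the strict clause at all.
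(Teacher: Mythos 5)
Your proof is correct and is exactly the argument the paper intends: use Proposition \ref{AxialSweepCor} to produce one direction $\zeta_0$ with $\int_{\R^n}|Du^{\zeta_0}|^p\,dx\le\int_{\R^n}|Du|^p\,dx$, note that $u^{\zeta_0}$ agrees with $u$ on the $x_n$-axis so Lemma \ref{FunLemma} gives $u\equiv u^{\zeta_0}$, and then upgrade to all $\zeta$ via the resulting axial symmetry. Your observations that only the non-strict inequality is needed and that the extension to arbitrary $\eta$ is immediate are both accurate; the only detail worth a half-sentence more is that $\zeta_0$ should be chosen among the $\sigma$-full-measure set of directions for which $u^{\zeta_0}\in{\cal D}^{1,p}(\R^n)$, as guaranteed by the lemma preceding Proposition \ref{AxialSweepCor}, so that Lemma \ref{FunLemma} applies.
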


The key to proving Proposition \ref{AxialSweepCor} is the following inequality.
\begin{lem}
Suppose $u \in{\cal D}^{1,p}(\R^n)$. Then $u^\zeta\in{\cal D}^{1,p}(\R^n)$ for $\sigma$ almost every $\zeta\in\mathbb{S}^{n-2}$ 
and
\be\label{AxialSweepIneq}
 \displaystyle\fint_{|\zeta|=1}\left(\int_{\R^n}|Du^\zeta|^pdx\right)d\sigma(\zeta)\le \int_{\R^n}|Du|^pdx.
\ee
If equality holds and $n\ge 3$, $u=u^\zeta$ for each $\zeta\in\mathbb{S}^{n-2}$.
\end{lem}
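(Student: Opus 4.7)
The plan is to reduce the problem to smooth $u$ via Proposition \ref{DensityThm}, compute $Du^\zeta$ explicitly, and then recognize the average over $\zeta \in \mathbb{S}^{n-2}$ as a polar-coordinate integral in $\R^{n-1}$.

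First I would assume $u \in C^\infty(\R^n) \cap \mathcal{D}^{1,p}(\R^n)$. A direct chain rule computation on $u^\zeta(y,x_n)=u(|y|\zeta,x_n)$ yields
\begin{equation}
D_y u^\zeta(y,x_n) = \bigl(D_y u(|y|\zeta,x_n)\cdot \zeta\bigr)\frac{y}{|y|}, \qquad \partial_{x_n}u^\zeta(y,x_n)=\partial_{x_n}u(|y|\zeta,x_n),
\end{equation}
so $|Du^\zeta(y,x_n)|$ depends on $y$ only through $r=|y|$. Writing $\int_{\R^n}|Du^\zeta|^p\,dx$ in polar coordinates in $y$ produces a factor $|\mathbb{S}^{n-2}|$ and an integrand evaluated at $(r\zeta,x_n)$. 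Averaging over $\zeta$, using Fubini to swap the order, and then the identity
\begin{equation}
\int_{\mathbb{S}^{n-2}}\int_0^\infty h(r,\zeta)\,r^{n-2}\,dr\,d\sigma(\zeta)=\int_{\R^{n-1}}h\!\left(|z|,\tfrac{z}{|z|}\right)dz,
\end{equation}
I would derive the key identity
\begin{equation}
\fint_{|\zeta|=1}\!\left(\int_{\R^n}|Du^\zeta|^p\,dx\right)d\sigma(\zeta)=\int_{\R^n}\bigl|(D_r u,\partial_{x_n}u)\bigr|^p\,dx.
\end{equation}
The inequality \eqref{AxialSweepIneq} then follows at once from the elementary pointwise estimate \eqref{ElemGradEst}, which in fact gives the stronger bound $\int_{\R^n}|Du|^p\,dx - \int_{\R^n}|D_{\mathbb{S}^{n-2}}u|^p\,dx$ on the right-hand side.

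Next I would pass from smooth $u$ to general $u\in\mathcal{D}^{1,p}(\R^n)$ using Proposition \ref{DensityThm}. The natural way is to apply the boxed identity (or inequality) to smooth approximants $u_k$ and pass to the limit via Fubini; because $Du_k\to Du$ in $L^p$, a subsequence of the slice-derivatives converges pointwise a.e.\ in $(z,x_n)$, which via polar-coordinate Fubini gives $L^p$ convergence of $Du_k^\zeta$ to $Du^\zeta$ for $\sigma$-a.e.\ $\zeta$. The finiteness of the average then forces $u^\zeta \in \mathcal{D}^{1,p}(\R^n)$ for $\sigma$-almost every $\zeta$. I expect this approximation step to be the most technical part, since the slice operation $\zeta\mapsto u^\zeta$ interacts with Fubini in a slightly delicate way; however, everything is controlled by the identity above applied to $u_k-u_j$.

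Finally, for the equality claim when $n\ge 3$, equality in \eqref{AxialSweepIneq} forces equality in \eqref{ElemGradEst} almost everywhere, hence $D_{\mathbb{S}^{n-2}}u=0$ a.e. The Hardy-type inequality \eqref{POineq} then yields $u=u^*$ almost everywhere, so $u$ is axially symmetric about the $x_n$-axis. By definition this means $u(y,x_n)=u(|y|\zeta,x_n)=u^\zeta(y,x_n)$ for every $\zeta\in\mathbb{S}^{n-2}$, giving the claimed rigidity.
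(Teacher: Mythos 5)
Your proposal is correct and follows the same overall strategy as the paper: the explicit computation of $Du^\zeta$ for smooth $u$, the polar-coordinate/Fubini identity $\fint_{|\zeta|=1}\int_{\R^n}|Du^\zeta|^p\,dx\,d\sigma(\zeta)=\int_{\R^n}|(D_ru,\partial_{x_n}u)|^p\,dx$, the refinement via \eqref{ElemGradEst}, and the equality case via \eqref{POineq} all match the paper's steps exactly. The one place you genuinely diverge is the approximation step. The paper takes a weak-convergence route: it shows the family $(y,x_n,\zeta)\mapsto Du_k^\zeta$ is bounded in $L^p(\R^n\times\mathbb{S}^{n-2})$, extracts a weak limit $v$, uses weak lower semicontinuity for the inequality, and then identifies $v^\zeta=Du^\zeta$ for a.e.\ $\zeta$ by integrating by parts against products $\phi\,\eta$ and invoking separability of $C^1_c(\R^n;\R^n)$. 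You instead apply the smooth-case identity to differences $u_k-u_j$, which (since $|(D_rv,\partial_{x_n}v)|\le|Dv|$ pointwise) shows $(Du_k^\zeta)_k$ is Cauchy, hence strongly convergent, in $L^p(\R^n\times\mathbb{S}^{n-2})$; Fubini then gives slice-wise $L^p$ convergence for a.e.\ $\zeta$ along a subsequence. This is cleaner and yields the identity, not merely the inequality, in the limit; you should still spell out the (routine) identification of the strong $L^p$-limit with the weak gradient of $u^\zeta$, which follows from the uniform convergence $u_k^\zeta\to u^\zeta$ and one integration by parts. Two minor omissions: the case $n=2$ needs separate (and easier) treatment since $D_ru$ and $D_{\mathbb{S}^{n-2}}u$ are not defined for $n=2$ --- the paper computes directly that the average over $\zeta\in\{\pm1\}$ equals $\int_{\R^2}|Du|^p\,dx$ exactly; and ``pointwise a.e.\ convergence of slice-derivatives'' alone would not suffice for the $L^p$ conclusion, but your Cauchy argument supersedes that remark anyway.
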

\begin{proof}
1. First suppose $n=2$. Note  $u^{\pm 1}(y,x_2)=u(\pm |y|,x_2)$.  Direct computation shows
$$
\int_{\R^2}|Du^1|^pdx=2\int_{\R}\int^\infty_0|Du(y,x_2)|^pdydx_2
$$
and 
$$
\int_{\R^2}|Du^{-1}|^pdx=2\int_{\R}\int^{0}_{-\infty}|Du(y,x_2)|^pdydx_2.
$$
Therefore, 
\be
 \displaystyle\fint_{|\zeta|=1}\left(\int_{\R^n}|Du^\zeta|^pdx\right)d\sigma(\zeta)=\int_{\R^2}\frac{|Du^{1}|^p+|Du^{-1}|^p}{2}dx=\int_{\R^2}|Du|^pdx.
\ee

\par 2. Now let's assume $n\ge3 $ and that $u \in C^\infty(\R^n)\cap {\cal D}^{1,p}(\R^n)$. Note that for $r=|y|>0$, 
\be
Du^\zeta(y,x_n)=\left(\left[D_yu(r\zeta,x_n)\cdot \zeta\right] \frac{y}{r},\partial_{x_n}u(r\zeta,x_n)\right)
\ee
and
\be
|Du^\zeta(y,x_n)|=|(D_ru(r\zeta,x_n),\partial_{x_n}u(r\zeta,x_n))|.
\ee
As a result, 
\begin{align}
\fint_{|\zeta|=1}|Du^\zeta(y,x_n)|^pd\sigma(\zeta)&= \fint_{|\zeta|=1}\left|(D_ru(r\zeta,x_n),\partial_{x_n}u(r\zeta,x_n))\right|^pd\sigma(\zeta)\\
&= \fint_{|z|=r}\left|(D_ru(z,x_n),\partial_{x_n}u(z,x_n))\right|^pd\sigma(z).
\end{align}

\par Observe that $|Du^\zeta(y,x_n)|$ is continuous on set of $(y,x_n,\zeta)\in(\R^{n-1}\setminus\{0\})\times \R\times \mathbb{S}^{n-2}$. So we can apply Fubini's theorem and conclude
\begin{align}
&\fint_{|\zeta|=1}\left(\int_{\R^{n}}|Du^\zeta|^pdx\right)d\sigma(\zeta)\\
&\hspace{1in} = \int_{\R^n}\left(\fint_{|\zeta|=1}|Du^\zeta|^pd\sigma(\zeta)\right)dx\\
&\hspace{1in} = \int_{\R}\int^\infty_0\int_{|y|=r}\left(\fint_{|\zeta|=1}|Du^\zeta(y,x_n)|^pd\sigma(\zeta)\right)d\sigma(y)drdx_n\\
&\hspace{1in} = \int_{\R}\int^\infty_0\int_{|y|=r}\left(\fint_{|z|=r}\left|(D_ru(z,x_n),\partial_{x_n}u(z,x_n))\right|^pd\sigma(z)\right)d\sigma(y)drdx_n\\
&\hspace{1in} = \int_{\R}\int^\infty_0\int_{|z|=r}|(D_ru(z,x_n),\partial_{x_n}u(z,x_n))|^pd\sigma(z)drdx_n\\
&\hspace{1in} = \int_{\R}\int_{\R^{n-1}}|(D_ru(z,x_n),\partial_{x_n}u(z,x_n))|^pdzdx_n\\
&\hspace{1in} = \int_{\R^n}|(D_ru,\partial_{x_n}u)|^pdx.
\end{align}

\par 3. For $u \in{\cal D}^{1,p}(\R^n)$, we can select $(u_k)_{k\in \N}\subset  C^\infty(\R^n)\cap {\cal D}^{1,p}(\R^n)$ such that 
$u_k\rightarrow u$ uniformly in $\R^n$ and $Du_k\rightarrow Du$ in $L^p(\R^n; \R^n)$ as $k\rightarrow\infty$. It is also easy to check from the definition that $u^\zeta_k\rightarrow u^\zeta$ uniformly on $\R^n$ for each $\zeta\in\mathbb{S}^{n-2}$. We define 
$$
v_k(y,x_n,\zeta)=
\begin{cases}
Du_k^\zeta(y,x_n), \quad &y\neq 0\\
0,\quad &y=0
\end{cases}
$$
for $(y,x_n,\zeta)\in\R^{n-1}\times \R\times \mathbb{S}^{n-2}$.
By the estimate derived above 
\begin{align}
\iint_{\R^{n}\times \mathbb{S}^{n-2}}|v_k|^pd({\cal L}\times \sigma)=\int_{|\zeta|=1}\left(\int_{\R^{n}}|Du_k^\zeta|^pdx\right)d\sigma(\zeta)= \sigma( \mathbb{S}^{n-2})
\int_{\R^n}|(D_ru_k,\partial_{x_n}u_k)|^pdx.
\end{align}
Here ${\cal L}$ denotes Lebesgue measure on $\R^n$. 

\par Since $Du_k\rightarrow Du$ in $L^p(\R^n; \R^n)$, 
\be
\sup_{k\in \N}\iint_{\R^{n}\times \mathbb{S}^{n-2}}|v_k|^pd({\cal L}\times \sigma)<\infty.
\ee
As a result, there is a measurable $v: \R^{n}\times \mathbb{S}^{n-2} \rightarrow \R^n$ with $|v|\in L^p( \R^{n}\times \mathbb{S}^{n-2};{\cal L}\times \sigma)$ and a subsequence $(v_{k_j})_{j\in \N}$ such that
\be
\lim_{j\rightarrow\infty}\iint_{\R^{n}\times \mathbb{S}^{n-2}}v_{k_j}\cdot \varphi d({\cal L}\times \sigma)
=\iint_{\R^{n}\times \mathbb{S}^{n-2}}v\cdot \varphi d({\cal L}\times \sigma)
\ee
for each measurable $\varphi: \R^{n}\times \mathbb{S}^{n-2}\rightarrow \R^n$ with $|\varphi|\in L^{p/(p-1)}( \R^{n}\times \mathbb{S}^{n-2};{\cal L}\times \sigma)$.

\par In view of this weak convergence, we also have 
\begin{align}\label{WeakConvLimit}
\iint_{\R^{n}\times \mathbb{S}^{n-2}}|v|^pd({\cal L}\times \sigma)&\le \liminf_{j\rightarrow\infty}\iint_{\R^{n}\times \mathbb{S}^{n-2}}|v_{k_j}|^p d({\cal L}\times \sigma)\\
&\le \sigma( \mathbb{S}^{n-2}) \int_{\R^n}|(D_ru,\partial_{x_n}u)|^pdx.
\end{align}
We can apply Fubini's theorem once again to find
\begin{align}\label{FubiniApp}
\int_{|\zeta|=1}\left(\int_{\R^{n}}|v^\zeta(x)|^pdx\right)d\sigma(\zeta)&\le \sigma( \mathbb{S}^{n-2}) \int_{\R^n}|(D_ru,\partial_{x_n}u)|^pdx,
\end{align}
where $v^\zeta:=v(\cdot,\zeta)\in L^p(\R^n;\R^n)$ for $\sigma$ almost every $\zeta\in \mathbb{S}^{n-2}$ (Theorem 2.37 of \cite{Folland}). 

\par 4. We claim 
\be\label{SweepClaim}
Du^\zeta=v^\zeta\; \text{for $\sigma$ almost every $\zeta$}.
\ee
Inequality \eqref{AxialSweepIneq} would then follow from \eqref{FubiniApp}. To this end, we let $\phi\in C^1_c(\R^n;\R^n)$, $\eta\in C( \mathbb{S}^{n-2})$, and integrate by parts to get 
$$
\int_{\R^n\times \mathbb{S}^{n-2}}v_k\cdot \phi \eta\; d({\cal L}\times\sigma)=\int_{\mathbb{S}^{n-2}}\left(\int_{\R^n}Du^\zeta_k\cdot \phi dx \right)\eta d\sigma(\zeta)=-\int_{\mathbb{S}^{n-2}}\left(\int_{\R^n}u^\zeta_k\; \text{div}(\phi) dx \right)\eta d\sigma(\zeta).
$$
Since $\text{div}(\phi)$ has compact support and $u_k\rightarrow u$ uniformly in $\R^n$, we find in the limit as $k=k_j\rightarrow\infty$ that
$$
\int_{\R^n\times \mathbb{S}^{n-2}}v\cdot \phi \eta d({\cal L}\times\sigma)=-\int_{\mathbb{S}^{n-2}}\left(\int_{\R^n}u^\zeta\; \text{div}(\phi) dx \right)\eta d\sigma(\zeta).
$$
That is, 
$$
\int_{\mathbb{S}^{n-2}}\left(\int_{\R^n}v^\zeta\cdot \phi dx \right)\eta d\sigma(\zeta)=-\int_{\mathbb{S}^{n-2}}\left(\int_{\R^n}u^\zeta\; \text{div}(\phi) dx \right)\eta d\sigma(\zeta).
$$
And as $\eta$ is arbitrary
\be\label{SweepClaimzero}
\int_{\R^n}v^\zeta\cdot \phi dx=-\int_{\R^n}u^\zeta\; \text{div}(\phi) dx
\ee
for $\sigma$ almost every $\zeta\in \mathbb{S}^{n-2}$.  Since $C^1_c(\R^n;\R^n)$ equipped with the norm 
$$
\phi\mapsto\max\{\|\phi\|_{\infty},\|D\phi\|_{\infty}\}
$$
is separable, \eqref{SweepClaimzero} holds for a subset of $\mathbb{S}^{n-2}$ with full measure that is independent of $\phi$. We conclude \eqref{SweepClaim}.

\par 5. Observe that we have established
\be\label{AxialSweepIneqNbigThree}
\displaystyle\fint_{|\zeta|=1}\left(\int_{\R^n}|Du^\zeta|^pdx\right)d\sigma(\zeta)\le \int_{\R^n}|(D_ru,\partial_{x_n}u)|^pdx\le \int_{\R^n}|Du|^pdx-\int_{\R^n}|D_{\mathbb{S}^{n-2}}u|^pdx
\ee
for $n\ge 3$. So if equality holds in \eqref{AxialSweepIneq} and $n\ge 3$, 
$$
\int_{\R^n}|D_{\mathbb{S}^{n-2}}u|^pdx=0.
$$
In view of \eqref{POineq},  $u$ is axially symmetric with respect to the $x_n-$axis and so $u=u^\zeta$ for each $\zeta\in \mathbb{S}^{n-2}$.
\end{proof}

\begin{proof}[Proof of Proposition \ref{AxialSweepCor}]
In view of inequality \eqref{AxialSweepIneq}, there is a subset $S\subset \mathbb{S}^{n-2}$ for which 
$\sigma(S)>0$ and \eqref{AxialSweepIneq2} holds for $\sigma$ almost every $\zeta\in S$.  When $n\ge 3$, we have
the refinement \eqref{AxialSweepIneqNbigThree} which gives a subset $S\subset \mathbb{S}^{n-2}$ of positive measure such that 
$$
\int_{\R^n}|Du^\zeta|^pdx\le \int_{\R^n}|Du|^pdx-\int_{\R^n}|D_{\mathbb{S}^{n-2}}u|^pdx.
$$
for $\sigma$ almost every $\zeta\in S$. If $u$ is not axially symmetric, $\int_{\R^n}|D_{\mathbb{S}^{n-2}}u|^pdx>0$ and \eqref{AxialSweepIneq2} 
is strict. 
\end{proof}

\section{Monotonicity from the axis of symmetry}\label{SchwarzSec}
This section is dedicated to proving Theorem \ref{SchwarzThm}. To this end, let us suppose $u$ is the Morrey extremal which satisfies \eqref{HolderTwoPoint} and \eqref{upmone}. We established in previous work that when $u$ is restricted to the half space $x_n>0$, it assumes values between $(0,1]$ and is quasiconcave (Sections 3 and 4 of \cite{HS}).  As a result, 
$\{u\ge c\}$ is a convex subset of the $x_n>0$ half space for each $c\in (0,1]$.  Furthermore, we established the limit
$$
\lim_{|x|\rightarrow\infty}u(x)=0
$$ 
in \cite{HS2}. Consequently, we have that $\{u\ge c\}$ is also compact for each $c\in (0,1]$. 

\par Since $u$ is axially symmetric, 
$$
O(\{u\ge c\})=\{u\ge c\}
$$
for all 
orthogonal transformations $O: \R^n\rightarrow \R^n$ such that $Oe_n=e_n$.  It then follows that for $c\in (0,1]$ and $a>0$,
$$
\{y\in \R^{n-1}: u(y,a)\ge c\}
$$
is a closed ball in $\R^{n-1}$ centered at the origin whenever it is nonempty. This will be 
the crucial observation we use to prove Theorem \ref{SchwarzThm}.

\begin{proof}[Proof of Theorem \ref{SchwarzThm}]
\par Suppose $y_1,y_2\in \R^{n-1}$ with 
\be\label{yoneytwo}
|y_2|\le |y_1|
\ee
and $a>0$. Set 
$$
c=u(y_1,a).
$$
Note that since $a>0$, $c\in (0,1]$. Therefore, 
$$
y_1\in \{y\in \R^{n-1}: u(y,a)\ge c\}=\overline{B_r(0)}\subset \R^{n-1}
$$
for some $r\ge 0$. By \eqref{yoneytwo}, $y_2\in\overline{B_r(0)}$, as well. Consequently, 
$$
y_2\in \{y\in \R^{n-1}: u(y,a)\ge c\}
$$
and in turn 
$$
u(y_2,a)\ge c= u(y_1,a).
$$

\par The assertion 
$$
u(y_1,a)\le u(y_2,a)
$$
when
$$
|y_1|\le |y_2|\;\;\text{and}\;\; a<0
$$
follows similarly.  Finally, the conclusion of Theorem \ref{mainThm} that $u(x - 2 x_n e_n) = -u(x)$ for $x \in \R^n$  implies $u(y,0)=0$ for all $y\in \R^{n-1}$.
\end{proof}

\section{Cap symmetry}\label{CapSec}
In this section, we will recall the notion of the cap symmetrization of a subset of $\R^n$.  This leads naturally to a way to rearrange the values of a function on $\R^n$.  It turns out that the positive and negative parts of the Morrey extremal we have been studying in this 
paper is invariant under this rearrangement process. A key object in our study will be the {\it spherical cap} 
\be
C_{t,\theta}:=\{x\in \R^n: |x|=t,\; x_n>t\cos\theta\}
\ee
with radius $t\ge 0$ and opening angle $\theta\in [0,\pi]$.  We note that for $\theta>0$, $C_{t,\theta}$ is an open subset of the sphere $\partial B_t$. 

\par The following definition of cap symmetrization is due to Sarvas \cite{Sarvas} (see also Brock and Solynin \cite{BrockSolynin}).  Observe that we will also change notation and now use $\sigma$ to denote $n-1$ dimensional Hausdorff measure. 
\begin{SCfigure}
\caption{The diagram on the left depicts the spherical cap $C_{t,\theta}$ of radius $t$ and opening angle $\theta$. The cap contains all points $x \in \R^2$ such that $|x| = t$ and $x_2 > t \cos \theta$.}
\begin{tikzpicture}

\node at (-2,3){$C_{t,\theta}$};

\draw[very thick] (0,0) -- (2.121,-2.121) arc (-45 : 225: 3cm) -- cycle;

\draw[fill=white] (0,0) -- (2.975,0) arc (0 : 360: 2.975cm) -- cycle;

\draw[dashed] (0,0) -- (2.121,-2.121);

\draw[dashed] (0,0) -- (-2.121,-2.121);

\draw (0,0) -- (2.853,-0.927);

\draw (0,1) arc (90:225: 1cm);

\draw[fill=black] (2.853,-0.927) circle (.11cm) ;

\draw[->,thick] (0,-3.35) to (0,4);

\draw[->,thick] (-4,0) to (4,0);

\node at (-1.25,-0.2){$\theta$};

\node at (3.15,-0.927){$x$};

\node at (0.5,4){$x_2$};


\node at (4,0.5){$x_1$};

\node at (3.25,.25){$t$};

\end{tikzpicture}
\end{SCfigure}
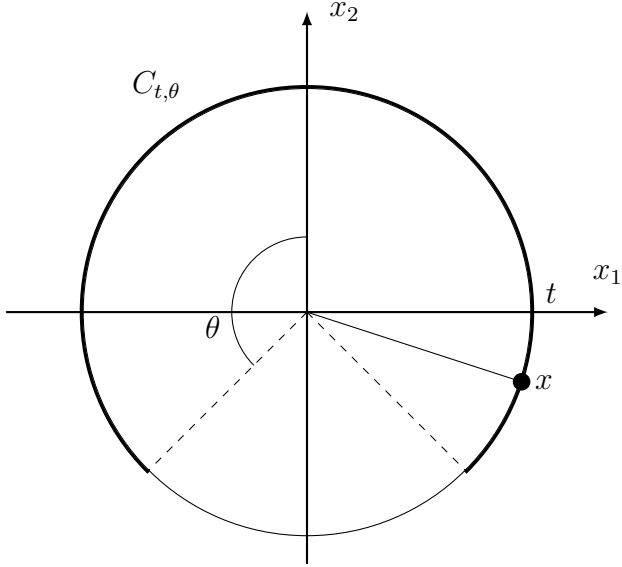

\begin{defn}
Suppose $A\subset \R^n$ is an open subset. The {\it cap symmetrization of $A$ with respect to the 
positive $x_n-$axis} is the subset $A^\star\subset\R^n$ which satisfies 
$$
A^\star\cap \partial B_t=
\begin{cases}
\emptyset  &\text{if}\; A\cap \partial B_t=\emptyset\\
 \partial B_t &\text{if}\; A\cap \partial B_t=\partial B_t\\
 C_{t,\theta}&\text{otherwise}
\end{cases}
$$
for each $t\ge 0$. If $A^\star\cap \partial B_t=C_{t,\theta}$, $\theta\in [0,\pi]$ is chosen so that 
$$
\sigma(A\cap \partial B_t)=\sigma(C_{t,\theta}).
$$
\end{defn}

\par Since $A^\star\cap \partial B_t$ is specified for each $t\ge 0$, this uniquely defines $A^\star$ when $A$ is open. 
If $A\subset \R^n$ is closed, we define $A^\star$ as above with
$$
\overline{C_{t,\theta}}=\{x\in \R^n: |x|=t,\; x_n\ge t\cos\theta\}
$$
replacing $C_{t,\theta}$. It's plain to see that if $A\subset B$, then $A^\star\subset B^\star$. It is also not hard to 
deduce the implication 
\be\label{HalfSpaceProp}
A\subset\{x_n>0\}\Longrightarrow A^\star\subset\{x_n>0\}.
\ee
Moreover, 
it is known that if $A$ is open, $A^\star$ is open; and if $A$ is closed, $A^\star$
is closed (section 2 of \cite{Sarvas}). Furthermore, we can apply the co-area 
formula to conclude $A^\star$ and $A$ have the same Lebesgue measure.

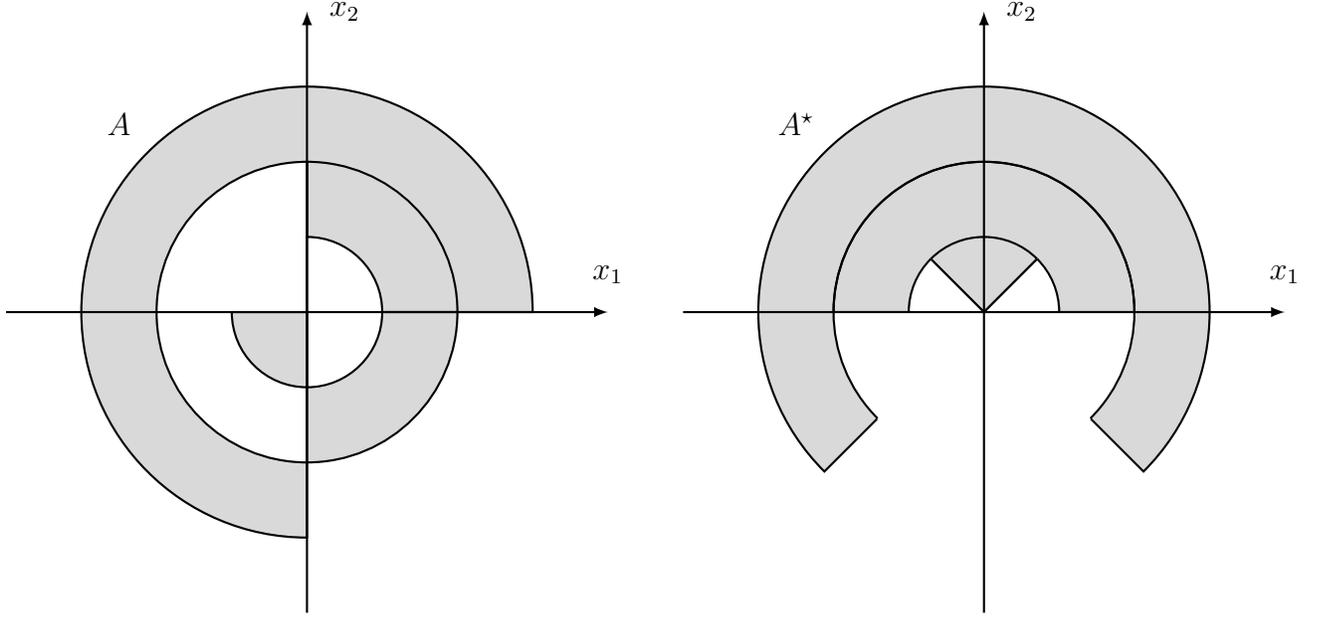
\begin{figure}
\begin{tikzpicture}

\node at (-2.5,2.5) {$A$};

\draw[thick,fill=gray!30] (0,0) -- (3,0) arc  (0 : 270: 3cm) -- cycle;

\draw[thick] (0,0) -- (2,0) arc (0 : 90: 2cm) -- cycle;

\draw[thick,fill=white] (0,0) -- (0,2) arc  (90 : 270: 2cm) -- cycle;

\draw[thick,fill=gray!30] (0,0) -- (0,-2) arc  (-90 : 0: 2cm) -- cycle;

\draw[thick,fill=white] (0,0) -- (0,-1) arc  (-90 : 90: 1cm) -- cycle;

\draw[thick,fill=gray!30] (0,0) -- (-1,0) arc (180: 270: 1cm) --cycle;

\draw[->,thick] (0,-4) to (0,4);

\draw[->,thick] (-4,0) to (4,0);

\node at (0.5,4){$x_2$};

\node at (4,0.5){$x_1$};

\hspace{9cm}

\node at (-2.5,2.5) {$A^\star$};

\draw[thick,fill=gray!30] (0,0) -- (2.121,-2.121) arc  (-45 : 225: 3cm) -- cycle;

\draw[thick,fill=gray!30] (0,0) -- (2,0) arc  (0: 180: 2cm) --cycle;

\draw[thick,fill=gray!30] (0,0) -- (.7071,.7071) arc (45: 135: 1cm) --cycle;

\draw[fill=white,white] (0,0) -- (-2,0) arc (180: 360:2cm) --cycle;

\draw[fill=white,white] (0,0) -- (1.4142,-1.4142) arc (-45: -135:2cm) --cycle;

\draw[fill=white,white] (0,0) -- (1,0) arc (0: 45:1cm) --cycle;

\draw[fill=white,white] (0,0) -- (1,0) arc (0: 45:1cm) --cycle;

\draw[fill=white,white] (0,0) -- (-.70711,.70711) arc (135: 180:1cm) --cycle;

\draw[->,thick] (0,-4) to (0,4);

\draw[thick] (0,0) to (.7071,.7071);

\draw[thick] (0,0) to (-.7071,.7071);

\draw[thick] (1,0) arc (0 : 45: 1cm);

\draw[thick] (-1,0) arc (180 : 135: 1cm);

\draw[thick] (1.4142,-1.4142) arc (-45: 225: 2cm);

\draw[->,thick] (-4,0) to (4,0);

\node at (0.5,4){$x_2$};

\node at (4,0.5){$x_1$};

\end{tikzpicture}
\caption{The shaded region on the left represents a (closed) subset $A \subset \mathbb{R}^2$. The shaded region on the right is $A^\star\subset \mathbb{R}^2$,  the cap symmetrization of $A$ in the direction of the positive $x_2$ axis.}
\end{figure}

\begin{defn}
 We say $v\in {\cal D}^{1,p}(\R^n)$ is {\it admissible} if $\{v>c\}$ has finite Lebesgue measure  
for each $c>\inf v$. In this case, we set 
$$
v^\star(x):=\sup\left\{c>\inf v: x\in \{v>c\}^\star\right\},\quad x\in \R^n
$$
as the {\it cap rearrangement} of $v$ with respect to the $x_n$ axis.  
\end{defn}
It is known that since $v\in {\cal D}^{1,p}(\R^n)$ is continuous, $v^\star$ is continuous with 
$$
\{v^\star>c\}=\{v>c\}^\star
$$
for $c>\inf v$ (section 3 of \cite{BrockSolynin}).  We will show that $v^\star$ is axially symmetric and also that it has the monotonicity property as described 
in Theorem \ref{CapThm}. Then we will explain how this translates to Morrey extremals. 
\begin{lem}
Suppose $v\in {\cal D}^{1,p}(\R^n)$ is admissible. Then $v^\star$ is axially symmetric with respect to the $x_n-$axis. 
\end{lem}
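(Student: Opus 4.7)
The plan is to reduce the axial symmetry of $v^\star$ to invariance of its superlevel sets under rotations fixing $e_n$, which in turn reduces to an invariance property of the cap symmetrization operation itself. The key observation is that for any orthogonal $O:\R^n\to\R^n$ with $Oe_n = e_n$, both $|Ox| = |x|$ and $(Ox)_n = Ox\cdot e_n = x\cdot O^{T} e_n = x\cdot e_n = x_n$ are preserved. It then follows directly from the defining description $C_{t,\theta} = \{x : |x|=t,\ x_n > t\cos\theta\}$ that $O(\partial B_t) = \partial B_t$ and $O(C_{t,\theta}) = C_{t,\theta}$ for every $t \ge 0$ and $\theta\in[0,\pi]$.

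Using this, I would first show that $O(A^\star) = A^\star$ for any open set $A \subset \R^n$. Because $O$ maps each sphere $\partial B_t$ to itself,
\[
O(A^\star)\cap\partial B_t \;=\; O\bigl(A^\star\cap\partial B_t\bigr),
\]
which is either $\emptyset$, $\partial B_t$, or $O(C_{t,\theta_t}) = C_{t,\theta_t}$, matching case by case the three alternatives used to define $A^\star$. Moreover, the opening angle $\theta_t$ is pinned down by $\sigma(A \cap \partial B_t)$, a quantity unaffected by replacing $A$ with $OA = A$ (which we don't need) or by the fact that $O$ preserves $\sigma$. Thus $O(A^\star)$ satisfies the defining description of the cap symmetrization of $A$, and hence $O(A^\star) = A^\star$.

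Next, since $v$ is continuous and admissible, for each $c > \inf v$ the set $\{v > c\}$ is open and of finite measure, so $\{v>c\}^\star$ is well-defined and the identity $\{v^\star > c\} = \{v>c\}^\star$ quoted from section~3 of \cite{BrockSolynin} applies. Combining with the preceding step gives $O(\{v^\star > c\}) = \{v^\star > c\}$ for every $c > \inf v$. Hence for any $x\in\R^n$ and any such $c$, $v^\star(Ox) > c$ iff $Ox \in \{v^\star>c\}$ iff $x \in O^{-1}(\{v^\star>c\}) = \{v^\star>c\}$ iff $v^\star(x) > c$, so $v^\star(Ox) = v^\star(x)$. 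The argument is essentially formal once one unpacks the definitions; the only point that requires any care is verifying that the three cases in the construction of $A^\star$ and the choice of opening angle are all preserved by $O$, which is the step I have separated out above.
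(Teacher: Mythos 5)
Your proposal is correct and follows essentially the same route as the paper: both arguments rest on the observation that an orthogonal $O$ with $Oe_n=e_n$ preserves each sphere $\partial B_t$ and each cap $C_{t,\theta}$, and then deduce that every superlevel set $\{v^\star>c\}=\{v>c\}^\star$ is $O$-invariant sphere by sphere. The only cosmetic difference is that you isolate the invariance $O(A^\star)=A^\star$ as a separate step, whereas the paper applies the same reasoning directly to $\{v^\star>c\}\cap\partial B_t$.
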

\begin{proof}
Suppose $O: \R^n\rightarrow \R^n$ is an orthogonal transformation with $Oe_n=e_n$. Observe 
$$
O^{-1}(\partial B_t)=\partial B_t\;\text{and}\; O^{-1}(C_{t,\theta})=C_{t,\theta}
$$ 
for each $t\ge 0$ and $\theta\in [0,\pi]$. As a result, 
\begin{align}
\{v^\star\circ O>c\}\cap \partial B_t &=O^{-1}(\{v^\star>c\})\cap \partial B_t\\
&=O^{-1}(\{v^\star>c\})\cap O^{-1}(\partial B_t)\\
&=O^{-1}\left(\{v^\star>c\}\cap \partial B_t\right)\\
&=\{v^\star>c\}\cap \partial B_t
\end{align}
for each $t\ge 0$ and $c>\inf v^\star$.  Consequently, 
$$
\{v^\star\circ O>c\}=\{v^\star>c\}
$$
for all $c>\inf v^\star$ and in turn $v^\star\circ O=v^\star$.
\end{proof}
\begin{lem}\label{capDecreaseLemma}
Suppose $v\in {\cal D}^{1,p}(\R^n)$ is admissible. If $x^1,x^2\in \R^n$ satisfies 
\be\label{x1x2incap}
 |x^1|= |x^2|\;\; \text{and}\;\;x^1_n\le x^2_n,
\ee
then
$$
v^\star(x^1)\le v^\star(x^2).
$$
\end{lem}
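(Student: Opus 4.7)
The plan is to argue by contrapositive using the structural identity $\{v^\star>c\}=\{v>c\}^\star$, which holds for every $c>\inf v$ since $v$ is continuous (this is precisely the fact recalled in the paragraph after the definition of cap rearrangement, citing \cite{BrockSolynin}).

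Fix $t=|x^1|=|x^2|\ge 0$ and suppose for contradiction that $v^\star(x^1)>v^\star(x^2)$. Choose any $c$ with $v^\star(x^2)<c<v^\star(x^1)$; note in particular $c>\inf v$. Then $x^1\in\{v^\star>c\}$ while $x^2\notin\{v^\star>c\}$. Intersecting with the sphere $\partial B_t$, the set
$$
\{v^\star>c\}\cap\partial B_t=\{v>c\}^\star\cap\partial B_t
$$
is, by the very definition of the cap symmetrization of the open set $\{v>c\}$, one of the three sets: $\emptyset$, $\partial B_t$, or an open spherical cap $C_{t,\theta}$ for some $\theta\in[0,\pi]$. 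It cannot be empty, since $x^1$ lies in it; and it cannot equal $\partial B_t$, since $x^2\in\partial B_t$ does not lie in it. Hence it equals $C_{t,\theta}$ for some $\theta\in[0,\pi]$.

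Now $x^1\in C_{t,\theta}$ means $x^1_n>t\cos\theta$. Combined with the hypothesis $x^2_n\ge x^1_n$, this gives $x^2_n>t\cos\theta$, and since $|x^2|=t$ we conclude $x^2\in C_{t,\theta}=\{v^\star>c\}\cap\partial B_t$, contradicting $x^2\notin\{v^\star>c\}$. Therefore $v^\star(x^1)\le v^\star(x^2)$, as required.

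The main obstacle is not computational but conceptual: one must carefully invoke the open-set version of cap symmetrization (the super-level set of $v^\star$ is open because $v^\star$ is continuous, so the cap $C_{t,\theta}$ appearing on each sphere is open, and the strict inequality $x_n>t\cos\theta$ is preserved when passing from $x^1$ to $x^2$ via $x^2_n\ge x^1_n$). No approximation by smooth functions or quantitative Pólya--Szegő estimates are needed here; the statement is essentially a tautology once one knows the level sets of $v^\star$ are caps centered on the positive $x_n$-axis.
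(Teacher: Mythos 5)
Your proof is correct and follows essentially the same route as the paper's: both arguments reduce the claim to the fact that the super-level sets of $v^\star$ meet each sphere $\partial B_t$ in a spherical cap, and that membership in such a cap is monotone in the $x_n$-coordinate. The only (harmless) difference is that you argue by contradiction with an intermediate level $c$ and the open-set identity $\{v^\star>c\}=\{v>c\}^\star$, whereas the paper argues directly at the level $c=v^\star(x^1)$ using closed caps $\overline{C_{R,\theta}}$.
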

\begin{proof}
Set $c=v^\star(x^1)$ and $R=|x^1|$.  Recall that 
$$
\{v^\star\ge c\}\cap \partial B_R=\{v\ge c\}^\star\cap \partial B_R=\overline{C_{R,\theta}}
$$
for some $\theta\in [0,\pi].$ By \eqref{x1x2incap}, $x^2 \in \overline{C_{R,\theta}}$. Consequently, 
$$
v^\star(x^2)\ge c=v^\star(x^1).
$$
\end{proof}
\begin{cor}\label{capMaxLemma}
Suppose $v\in {\cal D}^{1,p}(\R^n)$ is admissible.  Then 
$$
v^\star(e_n)=\max_{\partial B_1}v.
$$
\end{cor}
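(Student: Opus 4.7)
\textbf{Proof proposal for Corollary \ref{capMaxLemma}.} The plan is to combine two facts: first, by Lemma \ref{capDecreaseLemma}, $v^\star$ is nondecreasing in $x_n$ on each sphere $\partial B_t$, so its maximum on $\partial B_1$ is attained at $e_n$; second, the level sets $\{v>c\}$ and $\{v^\star>c\}$ have the same $(n-1)$-dimensional Hausdorff measure on $\partial B_1$ by the very definition of cap symmetrization, so $\max_{\partial B_1} v^\star = \max_{\partial B_1} v$. Chaining these gives $v^\star(e_n) = \max_{\partial B_1} v^\star = \max_{\partial B_1} v$.

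For the first reduction, fix any $x\in\partial B_1$ and apply Lemma \ref{capDecreaseLemma} with $x^1=x$ and $x^2=e_n$: the hypotheses $|x|=1=|e_n|$ and $x_n\le 1=(e_n)_n$ hold, so $v^\star(x)\le v^\star(e_n)$. Hence $v^\star(e_n)=\max_{\partial B_1}v^\star$.

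For the equimeasurability step, set $M:=\max_{\partial B_1}v$, which exists by continuity of $v$ and compactness of $\partial B_1$. For any $c>M$, $\{v>c\}\cap\partial B_1=\emptyset$, so by the definition of cap symmetrization $\{v>c\}^\star\cap\partial B_1=\emptyset$; using the identity $\{v^\star>c\}=\{v>c\}^\star$ (valid for $c>\inf v$), we get $v^\star\le M$ on $\partial B_1$. Conversely, for any $c\in(\inf v,M)$, continuity of $v$ makes $\{v>c\}\cap\partial B_1$ a nonempty open subset of $\partial B_1$, hence of positive $\sigma$-measure. Its cap symmetrization on $\partial B_1$ is therefore either all of $\partial B_1$ or a cap $C_{1,\theta}$ with $\theta>0$; in either case it contains $e_n$, so $v^\star(e_n)>c$. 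Letting $c\uparrow M$ yields $v^\star(e_n)\ge M$, completing the argument.

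There is no real obstacle here; the corollary is essentially a bookkeeping consequence of Lemma \ref{capDecreaseLemma} together with the measure-preserving property built into the definition of cap symmetrization. The only subtle point is remembering to treat the two special cases in the definition of $A^\star\cap\partial B_t$ (empty slice and full slice) so that the argument for the lower bound $v^\star(e_n)\ge M$ goes through when $v|_{\partial B_1}$ happens to be constant.
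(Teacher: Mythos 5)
Your proposal is correct and, apart from the superfluous opening reduction via Lemma \ref{capDecreaseLemma} (which is never actually used, since your two bounds already pin down $v^\star(e_n)$ directly), it is essentially the paper's own argument: the upper bound from the empty slice $\{v>c\}\cap\partial B_1=\emptyset$ for $c>M$, and the lower bound from the nonempty open slice for $c<M$ whose symmetrization contains a cap $C_{1,\theta}$ with $\theta>0$ and hence the point $e_n$. No changes needed.
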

\begin{proof}
Since $\{v>\max_{\partial B_1}v\}\cap \partial B_1=\emptyset$, 
$$
\{v>\max_{\partial B_1}v\}^\star\cap \partial B_1=\{v^\star>\max_{\partial B_1}v\}\cap \partial B_1=\emptyset.
$$
It follows that  
$$
v^\star(e_n)\le \max_{\partial B_1}v.
$$

\par For any $\epsilon>0$, $\{v>\max_{\partial B_1}v-\epsilon\}\cap\partial B_1$ is nonempty and open. Therefore, 
$$
\{v>\max_{\partial B_1}v-\epsilon\}^\star\cap \partial B_1=\{v^\star>\max_{\partial B_1}v-\epsilon\}\cap \partial B_1\supset C_{1,\theta}
$$ 
for some $\theta\in (0,\pi]$. It follows that $e_n\in \{v^\star>\max_{\partial B_1}v-\epsilon\}\cap \partial B_1$ and so
$$
v^\star(e_n)>\max_{\partial B_1}v-\epsilon.
$$
As $\epsilon>0$ is arbitrary, 
$$
v^\star(e_n)\ge \max_{\partial B_1}v.
$$
\end{proof}

\par The last fact we will need in order to prove Theorem \ref{CapThm} is the P\'olya-Szeg\"o inequality. It states for each admissible  $v\in {\cal D}^{1,p}(\R^n)$, the inequality
\be\label{PolyaCap}
\int_{\R^n}\left|Dv^\star\right|^pdx\le \int_{\R^n}|Dv|^pdx
\ee
holds. This inequality and various other properties of cap and Steiner symmetrizations were verified by Van Schaftingen in \cite{VanS}. 

\begin{proof}[Proof of Theorem \ref{CapThm}]
Define 
$$
v(x):=\max\{u(x),0\},\quad x\in \R^n,
$$
where $u$ is the Morrey extremal which satisfies \eqref{HolderTwoPoint} and \eqref{upmone}.  It is easy to check that $v\in {\cal D}^{1,p}(\R^n)$; and $v$ is admissible since $\{u\ge c\}$ is compact for $c\in (0,1]$. We recall that $u(e_n)=\sup_{\R^n}u=1$, and in view of Lemma \ref{capMaxLemma}, 
$$
v^\star(e_n)=\max_{\partial B_1}u=1.
$$
Inequality \eqref{PolyaCap} also implies 
$$
\int_{\R^n}\left|Dv^\star\right|^pdx\le \int_{\R^n}\left|Dv\right|^pdx=\int_{\{x_n>0\}}|Du|^pdx.
$$

\par By definition, 
$v^\star\ge 0$, and we also have $\{u>c\}\subset \{x_n>0\}$ for $c>0$. As a result,
$$
\{v^\star>c\}=\{v>c\}^\star=\{u>c\}^\star\subset \{x_n>0\}
$$
by \eqref{HalfSpaceProp}. It follows that $v^\star|_{\{x_n\le 0\}}= 0$. Consequently, 
$$
\int_{\{x_n>0\}}\left|Dv^\star\right|^pdx\le \int_{\{x_n>0\}}|Du|^pdx
$$
and
\be\label{consistencyVstar}
v^\star|_{\{x_n=0\}} = 0.
\ee

\par Define $w$ as the odd extension of $v^\star|_{\{x_n\ge 0\}}$ to the half space $x_n<0$. That is, we set 
$$
w(x)=
\begin{cases}
v^\star(x), \quad & x_n\ge 0\\
-v^\star(x-2x_ne_n), \quad & x_n<0.
\end{cases}
$$
Using \eqref{consistencyVstar}, it is straightforward to verify $w\in {\cal D}^{1,p}(\R^n)$. Also note  
$$
w(e_n)=v^\star(e_n)=1\;\;\text{and}\;\; w(-e_n)=-v^\star(e_n)=-1
$$
and 
$$
\int_{\R^n}\left|Dw\right|^pdx=2\int_{\{x_n>0\}}\left|Dv^\star\right|^pdx
\le 2\int_{\{x_n>0\}}|Du|^pdx=\int_{\R^n}|Du|^pdx.
$$

\par We can then employ Lemma \ref{FunLemma} to conclude $u\equiv w$. In particular, 
$$
u|_{\{x_n\ge 0\}}=v^\star|_{\{x_n\ge 0\}}
$$
By Lemma \ref{capDecreaseLemma}, we also have that if $x^1,x^2\in \R^n$ satisfies 
\be
 |x^1|= |x^2|\;\; \text{and}\;\;0\le x^1_n\le x^2_n,
\ee
then
\be\label{SphericalOrder}
u(x^1)\le u(x^2).
\ee

\par Moreover, if 
\be
 |x^1|= |x^2|\;\; \text{and}\;\; x^1_n\le x^2_n\le 0,
\ee
then
$$
u(x^2-2x^2_ne_n)\le u(x^1-2x^1_ne_n).
$$
by our remarks above.  Since $u$ is antisymmetric, 
$$
-u(x^2)\le -u(x^1)
$$
which of course again gives \eqref{SphericalOrder}. Since
$$
u(x^1) \leq 0 \leq u(x^2)
$$
when
$$
x_n^1 \leq 0 \leq x_n^2,
$$
we conclude that \eqref{SphericalOrder} holds for all $x^1,x^2\in \R^n$ satisfying
$$
|x^1| = |x^2| \quad \text{and} \quad x_n^1 \leq x_n^2.
$$
\end{proof}

\appendix

\section{Approximation}
This section is devoted to showing that smooth functions are ``dense" in $\mathcal{D}^{1,p} (\R^n)$.  In the following proposition, we will say that a sequence $(u_k)_{k\in \N}\subset \mathcal{D}^{1,p} (\R^n)$ converges to $u$ in $C^{1-n/p}(\R^n)$ if 
\[
\lim_{k\rightarrow\infty}\left(\|u_k-u\|_\infty+[u_k-u]_{1-n/p}\right)= 0.
\]
Note that each $u_k$ or $u$ need not be bounded on $\R^n$. 
\begin{prop}\label{DensityThm}
For each $u\in  \mathcal{D}^{1,p} (\R^n)$, there is $(u_k)_{k\in \N}\subset C^\infty (\R^n) \cap \mathcal{D}^{1,p} (\R^n)$
such that 
$$
u_k\rightarrow u
$$
in $C^{1-n/p}(\R^n)$ and 
$$
Du_k\rightarrow Du
$$
in $L^p(\R^n;\R^n)$ as $k\rightarrow\infty$.
\end{prop}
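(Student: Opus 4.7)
The natural plan is to use standard mollification and then use Morrey's inequality itself to transfer $L^p$ gradient convergence into Hölder seminorm convergence. Let $\eta \in C_c^\infty(\R^n)$ be a standard mollifier (nonnegative, radial, supported in the unit ball, integrating to $1$), set $\eta_k(x) := k^n \eta(kx)$, and define
\[
u_k := u * \eta_k.
\]
Then $u_k \in C^\infty(\R^n)$, and since $Du_k = (Du) * \eta_k$, Young's inequality gives $Du_k \in L^p(\R^n;\R^n)$ with $\|Du_k\|_p \leq \|Du\|_p$, so $u_k \in C^\infty(\R^n) \cap \mathcal{D}^{1,p}(\R^n)$. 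The standard $L^p$-approximation property of mollifiers applied componentwise to $Du \in L^p(\R^n;\R^n)$ yields $Du_k \to Du$ in $L^p(\R^n;\R^n)$.

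For the uniform convergence, I would use the (Hölder continuous representative of the) function $u$ and the fact that $p > n$ implies $u$ has finite $1 - n/p$ Hölder seminorm $[u]_{1-n/p} \leq C_* \|Du\|_p < \infty$ by Morrey's inequality. Then for every $x \in \R^n$,
\[
|u_k(x) - u(x)| \;=\; \left|\int_{\R^n} \eta_k(y)\bigl(u(x-y) - u(x)\bigr)\,dy\right| \;\leq\; [u]_{1-n/p}\!\int_{\R^n}|y|^{1-n/p}\eta_k(y)\,dy \;\leq\; \frac{C\,[u]_{1-n/p}}{k^{1-n/p}},
\]
which tends to $0$ uniformly in $x$ as $k \to \infty$.

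The crucial observation is that Morrey's inequality makes the Hölder seminorm convergence essentially free. Indeed, $u_k - u \in \mathcal{D}^{1,p}(\R^n)$, so applying the sharp Morrey inequality to the difference gives
\[
[u_k - u]_{1-n/p} \;\leq\; C_*\left(\int_{\R^n} |Du_k - Du|^p\,dx\right)^{1/p},
\]
and the right-hand side tends to $0$ by Step~1. Combining this with the uniform convergence above yields $u_k \to u$ in $C^{1-n/p}(\R^n)$, completing the proof.

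No step is genuinely hard here; the only mild subtlety is that $u$ need not be bounded on $\R^n$, which is why one has to use the Hölder seminorm rather than an $L^\infty$ estimate to control $|u_k(x)-u(x)|$ at each point. Once one notes that the Hölder bound on $u$ already gives a uniform-in-$x$ modulus for the mollification error and that Morrey's inequality converts $L^p$ gradient convergence into Hölder seminorm convergence for the difference $u_k - u$, all the required convergences follow from standard mollifier facts.
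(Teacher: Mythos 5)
Your proposal is correct and follows essentially the same route as the paper: mollify, bound $|u_k(x)-u(x)|$ pointwise via the finite $1-n/p$ H\"older seminorm of $u$ to get uniform convergence, use the standard $L^p$ convergence of mollified gradients, and then apply Morrey's inequality to $u_k-u$ to convert gradient convergence into H\"older seminorm convergence. No gaps.
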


\begin{proof}
Let $\eta \in C_c^\infty (\R^n)$ be a standard mollifier. That is, $\eta$ is nonnegative with $\text{supp}(\eta)\subset B_1$ and $\int_{\R^n}\eta d x=1$.  We set $\eta^\epsilon (x) := \epsilon^{-n} \eta ( x / \epsilon )$ and define the mollification of $u$ as $u^\epsilon := \eta^\epsilon * u$. That is,
\[
u^\epsilon(x)=\int_{\R^n}\eta^\epsilon(y)u(x-y)dy=\int_{\R^n}\eta^\epsilon(x-y)u(y)dy \,, \quad x\in \R^n. 
\]
Consequently, $u^\epsilon\in C^\infty(\R^n)$. Also observe
\begin{eqnarray}
| u^\epsilon(x) - u(x)| &=& \left| \int_{\R^n} \eta^\epsilon (y) ( u(x-y) - u(x)) d y \right| \nonumber \\
&\leq& \int_{\R^n} \eta^\epsilon (y) | u(x-y) - u(x) | d y \nonumber \\
&\leq& [u]_{1-n/p} \int_{\R^n}\eta^\epsilon (y) |y|^{1-n/p} d y \nonumber \\
&=& [u]_{1-n/p} \int_{B_\epsilon(0)}\eta^\epsilon (y) |y|^{1-n/p} d y \nonumber \\
&\leq& [u]_{1-n/p} \epsilon^{1-n/p} 
\end{eqnarray}
for $x\in \R^n$. Thus, $u^\epsilon \rightarrow u$ uniformly in $\R^n$.

\par We also have that
\begin{equation}\label{uepsGradientFormula}
D u^\epsilon(x) = \int_{\R^n} \eta^\epsilon (x-y) Du (y) d y
\end{equation}
(Theorem 1 in section 5.3 of \cite{Ev}). And since $Du \in L^p (\R^n;\R^n)$, 
\[
Du^\epsilon \to Du\;\;\text{in}\;\; L^p (\R^n;\R^n) 
\]
as $\epsilon \to 0^+$ (Theorem 4.22 in section 4.4 of \cite{Br}).  In addition, we can invoke Morrey's inequality to find 
\[
[u^\epsilon-u]_{1-n/p} \leq C_*\left(\int_{\R^n}|Du^\epsilon-Du|^{p}dx\right)^{1/p}\rightarrow 0
\]
as $\epsilon \rightarrow 0^+$.  As a result, 
$$
u^\epsilon \rightarrow u\;\;\text{in}\;\; C^{1-n/p}(\R^n)
$$
as $\epsilon \to 0^+$, as well.

\par Suppose $\epsilon_k>0$ with $\lim_{k\rightarrow\infty}\epsilon_k=0$ and set $u_k:=u^{\epsilon_k}$ for $k\in \N$. We then have that $(u_k)_{k\in \N}\subset C^\infty (\R^n) \cap \mathcal{D}^{1,p} (\R^n)$ satisfies the desired conclusions. 
\end{proof}


\typeout{get arXiv to do 4 passes: Label(s) may have changed. Rerun}

\end{document}